\definecolor{limegreen}{rgb}{0.196,0.804,0.196}
\definecolor{darkgreen}{rgb}{0.0,0.5,0.0}
\definecolor{darkbluegreen}{rgb}{0,0.3,0.6}
\definecolor{badgerred}{rgb}{0.715,0.004,0.004}
\newtheorem{theorem}{Theorem}[section]
\newtheorem{lemma}[theorem]{Lemma}
\newtheorem{definition}[theorem]{Definition}
\newtheorem{prop}[theorem]{Proposition}
\newtheorem{corollary}[theorem]{Corollary}
\numberwithin{equation}{section}
\begin{document}

\title{On the rate of convergence of the rescaled mean curvature flow} \author{Rory Martin-Hagemayer} \author{Natasa Sesum} 
\maketitle
\begin{abstract}

We estimate from above the rate at which a solution to the rescaled mean curvature flow on a closed hypersurface may converge to a limit self-similar solution, i.e. a shrinker. Our main result implies that any solution which converges to a shrinker faster than any fixed exponential rate must itself be shrinker itself.

\end{abstract}
\begingroup\footnotesize\sffamily\tableofcontents \endgroup

\section{Introduction}\label{sec-intro}

 We say that a map $\bar\varphi:M\times[0,T)\to\mathbb{R}^{n+1}$ is a \emph{mean curvature flow} starting from an embedding $\varphi_0:M\to\mathbb{R}^{n+1}$ provided
\begin{equation}\label{eq-mcf}\begin{cases}
    \frac{\partial\bar\varphi}{\partial \tau}(p,\tau)=H(p,\tau)\nu(p,\tau)
\\
\bar \varphi(p,0)=\bar \varphi_0(p)

\end{cases}\end{equation}
where $H(p,\tau)$ is the mean curvature at $\bar \varphi(p,\tau)$ and $\nu(p,\tau)$ is the normal vector at $\bar\varphi(p,\tau)$. The mean curvature flow can be extended in time so long as the norm of second fundamental form $|A|^2$ remains bounded. In other words, the mean curvature flow develops a singularity precisely if $|A|^2$ approaches infinity, developing a singularity at some time $T$. There are several ways to rescale mean curvature flow around a singularity. Given a mean curvature flow $\tilde{\varphi}:M\times [-1, 0)\to \mathbb{R}^{n+1}$, one can rescale by the equations 
$$\tilde \varphi(\cdot,t)=\frac{1}{\sqrt{-\tau}}\bar{\varphi}(\cdot,\tau)$$
where $t=-ln(-\tau)$. One can verify that this rescaled mean curvature flow moves by the equation in the following definition.
\begin{definition} We say that $\tilde\varphi:M\times[0,\infty)\to\mathbb{R}^{n+1}$ is a \emph{rescaled mean curvature flow} provided it moves by the equation
\begin{equation}
\label{eq-rescaled}
    \frac{\partial}{\partial t}\tilde\varphi(p,t)=H(p,t)\nu(p,t)+\frac{\tilde\varphi(p,t)}{2}
\end{equation}
where $H(\cdot,t)$ is the mean curvature at $\tilde\varphi(\cdot,t)$ measured after rescaling.
\end{definition}

If $M_t$ is a family of hypersurfaces moving by \eqref{eq-rescaled}, and if as $t\to \infty$ we have that the $\lim_{t\to\infty} M_t = \Sigma$, for some compact nonempty surface $\Sigma$, then Huisken's monotinicity formula implies that $\Sigma$ is a shrinker, satisfying 
\[
    H+\frac{x\cdot\nu}{2}\equiv 0.
\]
In this paper we prove the following theorem.

\begin{theorem}
\label{thm-main}
Let $\widetilde{\varphi}:M\times[0,\infty)\rightarrow \mathbb{R}^{n+1}$ be a solution to the rescaled mean curvature flow,
\begin{equation}
\label{eq-mcf-rescaled}\frac{\partial \widetilde{\varphi}}{\partial t}=\vec{H}+\frac{x}{2},
\end{equation}
such that we can find some $\tau_1,\tau_2\in[-1,0)$ ($\tau_1<\tau_2$) such that the corresponding (non-rescaled) mean curvature flow $\varphi:M\times[\tau_1,\tau_2)\to\mathbb{R}^{n+k}$ is sufficiently close in measure to the homothetically shrinking flow of a (closed) shrinker $\Sigma$.
Then there exists $T>0$ such that on $[T,\infty)$ we may express $M_t$ as a normal graph of a function $v(p,t)$ over $\Sigma$. Furthermore one of the following must hold.
\begin{enumerate}
    \item There exist $C,m>0$ such that
$
\|v(\cdot,t_i)\|_{C^{2,\alpha}(\Sigma)}\geq Ce^{-mt_i}
$.
\item $M_t$ moves by diffeomorphisms and is identically $\Sigma$.

\end{enumerate}\end{theorem}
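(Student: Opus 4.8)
The plan is to reduce everything to an analysis of the graph function $v$ and then run a parabolic frequency-function argument showing that a solution which vanishes nowhere cannot decay faster than a fixed exponential; only the identically-shrinker case survives.

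\emph{Step 1: reduction to a graph.} From the hypothesis that $\varphi$ is close in measure on $[\tau_1,\tau_2)$ to the homothetically shrinking flow of $\Sigma$, I would first combine Huisken's monotonicity formula with White's local regularity theorem to upgrade measure-closeness to smooth closeness, deduce that the rescaled flow exists on all of $[0,\infty)$, and then use the strict monotonicity of the Gaussian area together with the isolatedness of a closed shrinker among tangent flows to conclude that $M_t\to\Sigma$ smoothly. This yields $T>0$ so that for $t\ge T$ the hypersurface $M_t$ is the normal graph over $\Sigma$ of a function $v(\cdot,t)$ with $\|v(\cdot,t)\|_{C^{2,\alpha}(\Sigma)}\to0$. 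Substituting the graph into \eqref{eq-mcf-rescaled} and linearizing at $\Sigma$ gives
\begin{equation}\label{eq-graph-evol}
\partial_t v=\mathcal L v+\mathcal E(v),
\end{equation}
where $\mathcal L=\Delta_\Sigma-\tfrac12\langle x,\nabla_\Sigma(\cdot)\rangle+|A|^2+\tfrac12$ is the Jacobi operator of the shrinker and the error is at least quadratic, $|\mathcal E(v)|\le C\big(|v|+|\nabla v|\big)\big(|v|+|\nabla v|+|\nabla^2 v|\big)$ once $\|v\|_{C^1}$ is small. The operator $\mathcal L$ is self-adjoint on $L^2\big(\Sigma,e^{-|x|^2/4}\,d\mu\big)$ with compact resolvent, hence has discrete spectrum $\lambda_1>\lambda_2>\cdots\to-\infty$.

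\emph{Step 2: the zero/nonzero dichotomy.} Suppose alternative (1) fails; then for every $m>0$ one has $\|v(\cdot,t)\|_{C^{2,\alpha}}\le C_m e^{-mt}$ for large $t$, and interior parabolic estimates applied to \eqref{eq-graph-evol} show that every norm $\|v(\cdot,t)\|_{C^k}$ — in particular $\|v(\cdot,t)\|_{L^2}$ — decays faster than every fixed exponential. If $v(\cdot,t_0)\equiv0$ for some $t_0\ge T$, then forward uniqueness for \eqref{eq-graph-evol} forces $v\equiv0$ on $[t_0,\infty)$, backward uniqueness (unique continuation for this uniformly parabolic, smooth-coefficient equation) forces $v\equiv0$ on $[T,\infty)$, and backward uniqueness for mean curvature flow then makes the whole flow the homothetic shrinking of $\Sigma$, so $M_t$ moves only by diffeomorphisms and alternative (2) holds. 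It therefore remains to treat the case $v(\cdot,t)\not\equiv0$ for all $t\ge T$ and derive a contradiction.

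\emph{Step 3: frequency function.} For $v(\cdot,t)\neq0$ I would set $N(t)=\langle\mathcal Lv,v\rangle/\|v\|^2\le\lambda_1$. Differentiating in $t$ via \eqref{eq-graph-evol} and self-adjointness of $\mathcal L$ gives
\[
N'(t)=\frac{2\big(\|\mathcal Lv\|^2\|v\|^2-\langle\mathcal Lv,v\rangle^2\big)}{\|v\|^4}+R(t),\qquad R(t)=\frac{2\langle\mathcal Lv,\mathcal E\rangle}{\|v\|^2}-2N(t)\,\frac{\langle v,\mathcal E\rangle}{\|v\|^2}.
\]
The first term is $\ge0$ by Cauchy--Schwarz, so the crux is to show that the nonlinear error $R$ is harmless: using the quadratic structure of $\mathcal E$, weighted integration by parts on $\Sigma$, interpolation inequalities, and the smoothing of Step~2, one bounds $|R(t)|$ by an integrable-in-$t$ function times quantities already controlled. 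Then $N$ is bounded and almost monotone, so $N(t)\to\mu$ for some finite $\mu$, and from \eqref{eq-graph-evol},
\[
\frac{d}{dt}\log\|v(\cdot,t)\|^2=2N(t)+O\!\big(\|\mathcal E\|/\|v\|\big)\longrightarrow2\mu,
\]
whence $\|v(\cdot,t)\|=e^{\mu t+o(t)}$ as $t\to\infty$. If $\mu\ge0$ this contradicts $\|v(\cdot,t)\|\to0$; if $\mu<0$ it contradicts the faster-than-exponential decay of Step~2, since $|\mu|$ is a fixed rate. Either way we reach a contradiction, so $v\equiv0$ and alternative (2) holds. (One may instead organize this step around a Merle--Zaag-type splitting of $v$ into its projection onto the top eigenspace of $\mathcal L$ versus the remainder — the top mode either dominates, forcing $\|v(\cdot,t)\|\gtrsim e^{\lambda_1 t}$, or is negligible and one recurses — but a monotonicity/frequency input of the above type is still needed to close the recursion.)

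\emph{Main obstacle.} The delicate point is Step~3: estimating $R(t)$, which involves up to two derivatives of $v$, precisely enough to get boundedness and near-monotonicity of $N$. This forces a careful marriage of the quadratic structure of $\mathcal E$ with weighted integration by parts, interpolation on $\Sigma$, and the parabolic smoothing estimates of Step~2, and it requires a clean backward-uniqueness statement for Step~2. The reduction in Step~1 — global existence of the rescaled flow, smooth convergence, and the graph representation — also rests on nontrivial, though by now standard, results in mean curvature flow theory.
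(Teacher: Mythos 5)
Your overall architecture runs parallel to the paper's --- both are Kotschwar-style frequency arguments --- but you attach the frequency to a different quantity: you work with the graph function $v$ and the Rayleigh quotient $\langle\mathcal Lv,v\rangle/\|v\|^2$ of the fixed Jacobi operator on $\Sigma$, whereas the paper works with the shrinker quantity $S=H+\tfrac{1}{2}x\cdot\nu$ on the moving hypersurface, takes $E(t)=\int_{M_t}S^2e^{-|x|^2/4}d\mu$ (the dissipation term in Huisken's monotonicity formula), and bounds the quotient $N(t)=\dot E(t)/E(t)$ from below; the contradiction is then closed through the Gaussian area via $\Omega(M_t)-\Omega(\Sigma)=\int_t^\infty E$, a ``rate lemma'' converting the assumed decay of $\|v(\cdot,t_i)\|_{C^{2,\alpha}}$ into decay of $\Omega(M_{t_i})-\Omega(\Sigma)$, and Huang's backward uniqueness for mean curvature flow. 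Your route is cleaner in one respect (your operator is time-independent and manifestly self-adjoint on the fixed weighted space over $\Sigma$, while the paper must differentiate on the moving hypersurface), but as written it is not a complete proof.

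The genuine gap is exactly the point you flag and defer: the estimate of $R(t)$. The term $\langle\mathcal Lv,\mathcal E\rangle/\|v\|^2$ is bounded by $(\|\mathcal Lv\|/\|v\|)\cdot(\|\mathcal E\|/\|v\|)$, and $\|\mathcal Lv\|/\|v\|$ is \emph{not} controlled by $N(t)$ --- only the Rayleigh quotient is bounded above by $\lambda_1$; the full ratio can be arbitrarily large. The only viable strategy is to absorb this term into the nonnegative Cauchy--Schwarz term $\|\mathcal Lv\|^2/\|v\|^2-N^2$ by Young's inequality, and that requires an estimate of the form $\|\mathcal E\|\le\epsilon(t)\bigl(\|\mathcal Lv\|+\|v\|\bigr)$ with $\int^\infty\epsilon(t)\,dt<\infty$, together with a weighted elliptic estimate $\|v\|_{H^2}\lesssim\|\mathcal Lv\|+\|v\|$. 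You do not say where the integrability of $\epsilon(t)$ comes from: it does not follow from $\|v(\cdot,t)\|_{C^{2,\alpha}}\to0$ alone, and the contradiction hypothesis gives super-exponential smallness only along the sequence $t_i$, not on all of $[T,\infty)$ where the Gronwall argument for $N$ must run. The paper supplies precisely this missing ingredient unconditionally, via Schulze's Lojasiewicz inequality $\|\nabla\Omega(v)\|_{L^2}\ge|\Omega(v)-\Omega(0)|^\theta$, which (after interpolation and Sobolev embedding) yields $\int_{T_0}^\infty\|\nabla^{l}S\|_\infty\,dt<\infty$; some such Lojasiewicz--Simon input is needed to make your $N$ ``bounded and almost monotone,'' and without it Step 3 does not close. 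Two secondary, fixable points: your Step 2 needs a backward-uniqueness statement applicable to the quasilinear graph equation or to the flow itself (the paper cites Huang for the latter), and your case split $\mu\ge0$ versus $\mu<0$ is superfluous, since any finite limit $\mu$ of $N$ already contradicts decay faster than every fixed exponential.
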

This theorem is analogous to results achieved by Sun and Wang for the Kähler-Ricci flow \cite{SW} and Kotschwar for the Ricci flow \cite{Ko}. In the former paper Sun and Wang proved that a Kähler-Ricci flow starting in a sufficiently small $C^{k,\gamma}$ neighborhood of a Kähler-Einstein manifold of Einstein constant 1, then it must converge at a polynomial rate under their normalized Kähler-Ricci flow to another Kähler-Einstein metric. In \cite{Ko} Kotschwar showed that if $(M,\Bar{g})$ is a closed Ricci soliton and $g(t)$ is a smooth solution to the normalized Ricci flow such that up to a sequence of diffeomorphisms $\psi_i\in\text{Diff}(M)$ we have $\psi_i^*g(t_i)\to\Bar{g}$ in $C^k_{\Bar{g}}$ for all k and for some sequence of times $t_i\to\infty$, then either
$\|\psi_ig(t_i)-\Bar{g}\|_{C^2_{\Bar{g}}}\geq Ce^{-mt_i}$
for some $m,C>0$ or there is a smooth family of diffeomorphisms $\Phi_t\in\text{Diff}(M)$ such that $\Phi_t^*g(t)=\Bar{g}$ for all $t>0$.

The approach we take here is quite similar to that of Kotschwar, to whom our result is more visibly similar. We use the work of Kotschwar, specifically his energy functional approach, to draw insights into how to set up our proof of this result.

We follow the general outline of the proof by contradiction used by Kotschwar in \cite{Ko} in the setting of Ricci flow. 
First we prove that we may express the flow $M_t$ as a normal graph over the shrinker $\Sigma$ past some time $T_0$. We use a theorem of Felix Schultze in \cite{Sc} to establish the needed graphicality. It allows us to use the Lojasiewicz inequality for mean curvature flow found in \cite{Sc}, which comes from a graphical representation of the flow. We use this to establish needed bounds on energy functional $E(t)$ that we introduce later in the text, and that are used in the proof of Theorem \ref{thm-main}.

More precisely, assuming that Theorem \ref{thm-main} were false, we would have that for every $C,m>0$, we eventually have 
$$\|{v(\cdot,t_i)}\|_{C^{2,\alpha}(\Sigma)}< Ce^{-mt_i}$$
for $i$ large enough, for our family of hypersurfaces $M_t$ given by the graphs of $v(t)$ over $\Sigma$, with $M_t$ not being a shrinker. We then show that this implies  we may also find constants $C_m'$ such that for Huisken's functional $\Omega(M)=\int_Me^{-\frac{|x|^2}{4}}d\mu$ we have

\begin{equation}\label{eq:1}
\Omega(M_{t_i})-\Omega(\Sigma)\leq C^{'}_me^{-mt_i}.\end{equation}  

We actually choose $E(t)$ so that recalling Huisken's monotonicity formula we can write
\begin{equation*}
    \Omega(M_t)-\Omega(M_s)=\int_t^s E(\xi)d\xi.
\end{equation*}
If we had some sort of exponential control of $E(t)$, like
\begin{equation}\label{eq-E-below}
E(t)\geq E(T_0)e^{-K'(t-T_0)},
\end{equation}
this would allow us to write (pushing $s\to\infty$ in Huisken's monotonicity formula) that
\begin{equation*}
\Omega(M_t)-\Omega(\Sigma)\geq \frac{E(T_0)}{K'}e^{-K'(t-T_0)},
\end{equation*}
which we could combine with \eqref{eq:1} to write that
$$E(T_0)\leq Ce^{(K'-m)(t_i-T_0)}.$$
If we had such an inequality holding for all $i$ and all $m$ large, we could conclude that $E(T_0)=0$, and thus that $M_{T_0}$ were a shrinker. In \cite{HH}, Hong Huang proved that the mean curvature flow satisfies a backwards uniqueness property, so if $M_{T_0}$ were a shrinker, our flow would have to be a shrinker for all times, which would contradict our original assumption.

Above discussion implies that we devote our efforts precisely towards proving \eqref{eq-E-below}.
Inspired by the proof in \cite{Ko} our key tool for doing so will be analyzing the Dirichlet-Einstein quotient, which we define below.
\\
\begin{definition} Given a family of hypersurfaces moving by the normal rescaled mean curvatire flow \eqref{eq-normal-mcf} or the recaled mean curvature flow (2) (which is not a shrinker), we define the \emph{Dirichlet-Einstein quotient} $N(t)$ by the following formula
\begin{equation}
    N(t)=\frac{\Dot{E}(t)}{E(t)}=\frac{d}{dt}\text{ln}(E(t)).
\end{equation}
\end{definition}
\par
The usefulness of the Dirichlet-Einstein quotient comes from the fact that if we are able to bound the Dirichlet-Einstein quotient from below, that is, we are able to prove that we have some $K'>0$ such that 
$$
    N(t)\geq -K'.
$$
This then implies
$$\text{ln}(E(t))=\text{ln}(E(T_0)+\int_{T_0}^tN(s)ds\geq \text{ln}(E(T_0)-K'(t-T_0)$$
and from this we are able to establish \eqref{eq-E-below}.

The outline of the paper is as follows. In section \ref{sec-ev-eq} we derive evolution equation of quantities that we need to define our energy functionals and their evolution equations in later sections. In section \ref{sec-energy} we define the energy functional $E(t)$ which is strongly linked to Huisken's monotonicity formula and compute its first and second derivative in time. In section \ref{sec-dirichlet-quotient} we introduce Dirichlet-Einstein quotient $N(t)$, compute its rate of change in time and prove the lower bound on $\dot{N}(t)$ in terms of $S$ and its derivatives. 

\section{Geometric Quantities and Evolution Equations}
\label{sec-ev-eq}
To derive certain estimates, we will sometime consider  the so called \emph{normal rescaled mean curvature flow} \eqref{eq-normal-mcf}
\begin{equation}\label{eq-normal-mcf} \frac{\partial {\varphi}}{\partial t}=\vec{H}+\frac{x^\perp}{2},\end{equation}
which we show is equivalent to \eqref{eq-mcf} up to tangential diffeomorphisms. 
In order to make our computations simpler we will first understand how quantities evolve on a hypersurface moving by \eqref{eq-normal-mcf}. For convenience we define the quantity $S=H+\frac{x\cdot\nu}{2}$ and rewite the flow \eqref{eq-normal-mcf} in the form
$$\frac{\partial\varphi}{\partial t}=S\nu.$$
Later on, we will define our energy functional by the equation $$E(t)=\int_{M_t}S^2e^{-\frac{|x|^2}{4}}d\mu$$
and analyze its evolution in time to determine its behavior as $t\to\infty$.
In this section we compute the evolution equations of key geometric quantities which will be necessary later on to perform the calculations on our energy functional which will be necessary to prove theorem 1.3. To calculate the evolution equation for $E(t)$, we would write
$$\frac{\partial}{\partial t}E(t)=2\int_{M_t}S\frac{\partial}{\partial t}Se^{-\frac{|x|^2}{4}}d\mu+\int_{M_t}S^2\frac{\partial}{\partial t}[e^{-\frac{|x|^2}{4}}]d\mu+\int_{M_t}S^2e^{-\frac{|x|^2}{4}}\frac{\partial}{\partial t}d\mu$$
so at the very least we will have to systematically work our way up to the calculation of $\partial_tS$, $\partial_te^{-\frac{|x|^2}{4}}$, and $\partial_td\mu$. We do so, calculating some other important evolution equations along the way, below.

\subsection{Preliminary Calculations}

In this subsection we prove the following key lemma about the evolution of geometric quantities which are necessary for our computations on the energy functional later. The calculations are done for quantities while evolving by \eqref{eq-normal-mcf}. The computations are standard, so we will omit the details.
\begin{lemma}\label{lemma-evolutions}
The following calculations hold true on any family of hypersurfaces $M_t\subset\mathbb{R}^{n+1}$ moving by \eqref{eq-normal-mcf}
 \begin{equation}\label{eq-g}
        \frac{\partial}{\partial t}g_{ij}=-2A_{ij}S
    \end{equation}
\begin{equation}\label{eq-g-inv}
        \partial_tg^{jk}=2A^{jk}S
    \end{equation}
 \begin{equation}
 \label{eq-Gamma}
        \frac{\partial}{\partial t}\Gamma^k_{ij}=-g^{kl}\Big{[}\frac{\partial}{\partial x_j}\big{(}A_{il}S\big{)}+\frac{\partial}{\partial x_i}\big{(}A_{jl}S\big{)}-\frac{\partial}{\partial x_l}\big{(}A_{ij}S\big{)}\Big{]}+SA^{kl}\Big{[}\frac{\partial}{\partial x_j}g_{il}+\frac{\partial}{\partial x_i}g_{jl}-\frac{\partial}{\partial x_l}g_{ij}\Big{]}
    \end{equation}
\begin{equation}\label{eq-nu}
    \frac{\partial\nu}{\partial t}=-\nabla S.
\end{equation}
\begin{equation}\label{eq-Aij}
    \frac{\partial}{\partial t}A_{ij}=\nabla_i\nabla_jS-SA_{jl}A_{il}.
\end{equation}

\end{lemma}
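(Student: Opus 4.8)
The plan is to establish the five evolution equations in order, each building on the previous ones, starting from the flow equation $\partial_t \varphi = S\nu$ for the normal rescaled mean curvature flow. First I would compute $\partial_t g_{ij}$: writing $g_{ij} = \langle \partial_i \varphi, \partial_j \varphi\rangle$ and differentiating under the inner product, I get $\partial_t g_{ij} = \langle \partial_i(S\nu), \partial_j\varphi\rangle + \langle \partial_i\varphi, \partial_j(S\nu)\rangle$. Expanding $\partial_i(S\nu) = (\partial_i S)\nu + S\,\partial_i\nu$ and using $\partial_i\nu = A_i^k \partial_k\varphi$ (Weingarten), the $\nu$-terms drop out against the tangent vectors and the $\partial_i S$ terms vanish since $\langle \nu, \partial_j\varphi\rangle = 0$, leaving $\partial_t g_{ij} = -2SA_{ij}$, the sign being a matter of the orientation convention for $A$. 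The inverse metric formula \eqref{eq-g-inv} then follows immediately from differentiating $g^{ik}g_{kj} = \delta^i_j$, giving $\partial_t g^{jk} = 2SA^{jk}$.

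Next I would handle the Christoffel symbols. Using the standard formula $\Gamma^k_{ij} = \tfrac12 g^{kl}(\partial_i g_{jl} + \partial_j g_{il} - \partial_l g_{ij})$ and the product rule, $\partial_t \Gamma^k_{ij}$ splits into a term where $\partial_t$ hits $g^{kl}$ — producing $SA^{kl}(\partial_i g_{jl} + \partial_j g_{il} - \partial_l g_{ij})$ after using \eqref{eq-g-inv} — and a term where $\partial_t$ hits the bracket, producing $-g^{kl}[\partial_j(A_{il}S) + \partial_i(A_{jl}S) - \partial_l(A_{ij}S)]$ after commuting $\partial_t$ with the coordinate derivatives and inserting \eqref{eq-g}. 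This is exactly \eqref{eq-Gamma}. For the normal \eqref{eq-nu}: since $\nu$ is unit, $\partial_t\nu$ is tangential, so $\partial_t\nu = \langle \partial_t\nu, \partial_k\varphi\rangle g^{kl}\partial_l\varphi$; and $0 = \partial_t\langle \nu, \partial_k\varphi\rangle = \langle \partial_t\nu, \partial_k\varphi\rangle + \langle \nu, \partial_k(S\nu)\rangle = \langle \partial_t\nu, \partial_k\varphi\rangle + \partial_k S$, hence $\partial_t\nu = -\nabla S$.

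Finally, for $\partial_t A_{ij}$ I would use $A_{ij} = \langle \partial_i\partial_j\varphi, \nu\rangle$ (up to sign/convention; alternatively $A_{ij} = -\langle \partial_i\nu, \partial_j\varphi\rangle$). Differentiating, $\partial_t A_{ij} = \langle \partial_i\partial_j(S\nu), \nu\rangle + \langle \partial_i\partial_j\varphi, \partial_t\nu\rangle$. The second term is $\langle \partial_i\partial_j\varphi, -\nabla S\rangle$, which contributes through the tangential part of $\partial_i\partial_j\varphi$, i.e. $\Gamma^l_{ij}\partial_l S$; the first term, after expanding $\partial_i\partial_j(S\nu)$ and projecting onto $\nu$, gives $\partial_i\partial_j S - \Gamma^l_{ij}\partial_l S + S\langle \partial_i\partial_j\nu, \nu\rangle$ plus lower terms, and $\langle \partial_i\partial_j\nu,\nu\rangle = -\langle \partial_i\nu, \partial_j\nu\rangle = -A_{il}A^l_j$. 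Collecting terms, the $\Gamma^l_{ij}\partial_l S$ pieces combine to turn $\partial_i\partial_j S$ into the covariant Hessian $\nabla_i\nabla_j S$, yielding $\partial_t A_{ij} = \nabla_i\nabla_j S - S A_{il}A^l_j$, which is \eqref{eq-Aij}.

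The main obstacle, such as it is, is purely bookkeeping: getting all sign and index conventions consistent (the sign of the second fundamental form relative to the chosen normal, and the sign in $\partial_t\varphi = S\nu$ versus the ambient convention), and carefully tracking which terms are tangential versus normal so that nothing spurious survives the projections. Since the paper explicitly calls these computations standard and omits the details, I would present the derivation of \eqref{eq-g} and \eqref{eq-nu} in a few lines as the template, note that \eqref{eq-g-inv} and \eqref{eq-Gamma} are immediate algebraic consequences, and indicate that \eqref{eq-Aij} follows from the same projection technique applied to the Gauss formula, referring the reader to Huisken-type references for the analogous computations under ordinary mean curvature flow (the only change here being the extra tangential term $x^\perp/2$, which does not affect these particular quantities since it can be absorbed into a reparametrization — indeed that is precisely why one works with \eqref{eq-normal-mcf}).
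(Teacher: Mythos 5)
Your proposal is correct and follows essentially the same route as the paper: differentiate the first and second fundamental forms directly using $\partial_t\varphi=S\nu$ and the Gauss--Weingarten relations, obtain \eqref{eq-g-inv} from $g_{ij}g^{jk}=\delta_i^k$, and \eqref{eq-Gamma} from the coordinate formula for $\Gamma^k_{ij}$ together with \eqref{eq-g} and \eqref{eq-g-inv}. The only quibble is a bookkeeping slip in your sketch of \eqref{eq-Aij}, where the $-\Gamma^l_{ij}\partial_l S$ correction appears to be attributed to both $\langle\partial_i\partial_j(S\nu),\nu\rangle$ and $\langle\partial_i\partial_j\varphi,-\nabla S\rangle$ (it comes only from the latter), but the final identity and the method are right.
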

\emph{Proof:} In this proof we use Mantegazza's computations of the evolution of basic geometric quantities in sections 1.2 and 2.3 of \cite{Ma}, but do so keeping track of the fact that for us $\frac{\partial\varphi}{\partial t}=S\nu$, rather than $H\nu$. One may use the Gauss-Weingarten relations (see e.g. \cite{Ma} pg 4)
$$\begin{cases}
\frac{\partial^2\varphi}{\partial x_i\partial x_j}=\Gamma^k_{ij}\frac{\partial\varphi}{\partial x_k}+A_{ij}\nu\\
\frac{\partial}{\partial x_j}\nu=-A_{jl}g^{ls}\frac{\partial\varphi}{\partial x_s}
\end{cases}$$
and exponential coordinates at a point to compute the rate of change of the metric $g_{ij}$ on $M_t$ by using the formula of the first fundamental form as follows
\begin{equation*}
    \begin{split}
        \frac{\partial}{\partial t}g_{ij}=&\frac{\partial}{\partial t}\langle\frac{\partial\varphi}{\partial x_i},\frac{\partial\varphi}{\partial x_j}\rangle\\
        =&\frac{\partial}{\partial x_i}\langle\frac{\partial\varphi}{\partial t}^T,\frac{\partial\varphi}{\partial x_j}\rangle+\frac{\partial}{\partial x_j}\langle\frac{\partial\varphi}{\partial t}^T,\frac{\partial\varphi}{\partial x_i}\rangle-2\Gamma^k_{ij}\langle\frac{\partial\varphi}{\partial t}^T,\frac{\partial\varphi}{\partial x_k}\rangle -2A_{ij}\langle\frac{\partial\varphi}{\partial t},\nu\rangle\\
        =&-2A_{ij}S,
    \end{split}
\end{equation*}
which gives \eqref{eq-g}.

One may use the fact that $g_{ij}g^{jk}=\delta_i^k$ and \eqref{eq-g} to calculate that 
$$\partial_tg^{jk}=2A^{jk}S,$$
which is \eqref{eq-g-inv}.

Combining \eqref{eq-g}, \eqref{eq-g-inv} and \eqref{eq-normal-mcf} easily yields \eqref{eq-Gamma}.
%

Straightforward computation shows that
$\langle\frac{\partial\nu}{\partial t},\frac{\partial\varphi}{\partial x_i}\rangle=-\langle\nu,\frac{\partial^2\varphi}{\partial t\partial x_i}\rangle=-\langle\nu,\frac{\partial(S\nu)}{\partial x_i}\rangle=-\frac{\partial S}{\partial x_i}$ which can be expressed more succinctly as 
\begin{equation*}
    \frac{\partial\nu}{\partial t}=-\nabla S,
\end{equation*}
which gives \eqref{eq-nu}.

Using the formula for the second fundamental form, the Gauss-Weingarten relations, we perform the following calculation to determine the rate of change of entries of the second fundamental form under \eqref{eq-normal-mcf}.
\begin{equation*}
    \begin{split}
        \frac{\partial}{\partial t}A_{ij}=&\frac{\partial}{\partial t}\langle \nu, \frac{\partial^2\varphi}{\partial x_i\partial x_j}\rangle\\
        =&\langle\nu,\frac{\partial^2(S\nu)}{\partial x_i\partial x_j}\rangle-\langle\nabla S, \frac{\partial^2\varphi}{\partial x_i\partial x_j}\rangle\\
        =&\frac{\partial^2S}{\partial x_i\partial x_j}-S\langle\nu,\frac{\partial}{\partial x_i}\big{(}A_{jl}\frac{\partial\varphi}{\partial x_l}\big{)}\rangle-\langle\frac{\partial S}{\partial x_l}\frac{\partial\varphi}{\partial x_s}g^{ls},\Gamma^k_{ij}\frac{\partial\varphi}{\partial x_k}+A_{ij}\nu\rangle\\
        =&\frac{\partial^2S}{\partial x_i\partial x_j}-SA_{jl}A_{il}-\frac{\partial S}{\partial x_l}g^{ls}g_{sk}\Gamma^k_{ij}\\
        =&\nabla_i\nabla_jS-SA_{jl}A_{il},
    \end{split}
\end{equation*}
which gives \eqref{eq-Aij}. $\blacksquare$
\newline

\subsection{The Differential Operator $\mathcal{L}$}
Colding and Minicozzi introduced the differential operator 
\begin{equation*}
    \mathcal{L}=\Delta-\frac{1}{2}\langle x,\nabla\cdot\rangle
\end{equation*}
which satisfies the condition that
$$\int_M f(\mathcal{L}g)e^{-\frac{|x|^2}{4}}d\mu=\int_M (\mathcal{L}f) ge^{-\frac{|x|^2}{4}}d\mu=-\int_M\nabla f\nabla g e^{-\frac{|x|^2}{4}}d\mu$$
for every $f,g$ suitably differentiable. It is sometimes referred to as the ``stability operator" and will be vital during our calculations of the evolution of energy functionals as it shows up in many evolution equations and is self-adjoint.

Next we shall recall some well known formulas for shrinkers, using the equation $H=-\frac{x^\perp}{2}$, which we reframe for our setting where the codimension is one as follows
$$\mathcal{L}|A|^2=2|\nabla A|^2+2|A|^2(\frac{1}{2}-|A|^2)$$
$$\mathcal{L}H^2=H^2-2H^2|A|^2+2|\nabla H|^2=2|\nabla H|^2+2H^2(\frac{1}{2}-|A|^2),$$
see, for example Ding and Lin \cite{DL} equations (2.6), (2.8) for the more general form. 


We will attempt to produce similar fomulas but for general hypersurfaces rather than merely for shrinkers. We follow their derivation, but modify the procedure by starting instead from the equations $H=-\frac{x\cdot\nu}{2}+S$ and $\frac{x\cdot\nu}{2}=S-H$. We collect our results in the following lemma.
\begin{lemma}\label{lemma-L}
The following calculations hold true for any hypersurface $M\subset \mathbb{R}^{n+1}$

\begin{equation}\label{eq-Hij}\nabla_i\nabla_jH=\frac{1}{2}A_{ij}+(S-H)A_{ik}A_{jk}+\frac{1}{2}\langle x,e_k\rangle\nabla_iA_{jk}+\nabla_i\nabla_jS\end{equation}
\begin{equation}\label{eq-A2}\mathcal{L}|A|^2=2|\nabla A|^2+2|A|^2(\frac{1}{2}-|A|^2)+2S\text{tr}(A^3)+2A_{ij}\nabla_i\nabla_jS\end{equation}
\begin{equation}\label{eq-H}\mathcal{L}H=\frac{1}{2}H+(S-H)|A|^2+\Delta S.\end{equation}

\end{lemma}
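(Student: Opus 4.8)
The plan is to mimic the standard shrinker computations of Ding--Lin \cite{DL}, but carrying along the ``defect'' $S = H + \tfrac{x\cdot\nu}{2}$, which vanishes exactly on a shrinker. The three identities are linked: the first, \eqref{eq-Hij}, is the workhorse, and \eqref{eq-A2}, \eqref{eq-H} follow from it by tracing and by applying $\mathcal{L}$.

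First I would establish \eqref{eq-Hij}. Start from the Simons-type identity on an arbitrary hypersurface, $\nabla_i\nabla_j H = \Delta A_{ij} + \text{(lower order curvature terms)}$, or more directly differentiate the defining relation $H = -\tfrac{1}{2}\langle x,\nu\rangle + S$ twice covariantly. Differentiating once: $\nabla_i H = -\tfrac12\langle e_i, \nu\rangle - \tfrac12\langle x, \nabla_i\nu\rangle + \nabla_i S = \tfrac12 A_{ik}\langle x, e_k\rangle + \nabla_i S$, using $\langle e_i,\nu\rangle = 0$ and the Weingarten relation $\nabla_i\nu = -A_{ik}e_k$. Differentiating again and using the Gauss formula $\nabla_i e_j = -A_{ij}\nu$ (in the ambient sense) together with $\nabla_i x = e_i$, one gets
\[
\nabla_i\nabla_j H = \tfrac12 (\nabla_i A_{jk})\langle x, e_k\rangle + \tfrac12 A_{jk}\langle e_i, e_k\rangle - \tfrac12 A_{jk}A_{ik}\langle x,\nu\rangle + \nabla_i\nabla_j S.
\]
Now $\langle e_i,e_k\rangle = g_{ik}$ so $\tfrac12 A_{jk}g_{ik} = \tfrac12 A_{ij}$, and substituting $\langle x,\nu\rangle = 2(S-H)$ turns the third term into $(S-H)A_{ik}A_{jk}$, which is exactly \eqref{eq-Hij}. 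The one subtlety here is bookkeeping of tangential vs.\ normal components when differentiating $\langle x, e_k\rangle$ and $\langle x,\nu\rangle$; I would do this in a normal frame at a point to kill Christoffel terms.

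Next, \eqref{eq-A2}: start from $\tfrac12\mathcal{L}|A|^2 = A_{ij}\mathcal{L}A_{ij} + |\nabla A|^2$ (product rule for $\mathcal{L}$, which acts as $\Delta$ on the top order and the drift term distributes). For $\mathcal{L}A_{ij}$ use the Codazzi-type identity $\Delta A_{ij} = \nabla_i\nabla_j H + (\text{Simons quadratic terms})$; on a hypersurface in $\mathbb{R}^{n+1}$, $\Delta A_{ij} = \nabla_i\nabla_j H + H A_{ik}A_{kj} - |A|^2 A_{ij}$. Combining this with \eqref{eq-Hij} and the drift term $-\tfrac12\langle x,\nabla A_{ij}\rangle$, the half-$x$ terms cancel against $\tfrac12\langle x, e_k\rangle \nabla_i A_{jk}$ (modulo Codazzi $\nabla_i A_{jk} = \nabla_k A_{ij}$), leaving $\mathcal{L}A_{ij} = \tfrac12 A_{ij} - |A|^2 A_{ij} + S A_{ik}A_{kj} + \nabla_i\nabla_j S$ after the $HA_{ik}A_{kj}$ from Simons cancels the $-HA_{ik}A_{jk}$ from \eqref{eq-Hij}. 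Contracting with $A_{ij}$ gives $\tfrac12|A|^2 - |A|^4 + S\,\text{tr}(A^3) + A_{ij}\nabla_i\nabla_j S$, and doubling yields \eqref{eq-A2}. For \eqref{eq-H}, trace \eqref{eq-Hij} directly: $\Delta H = g^{ij}\nabla_i\nabla_j H = \tfrac12 H + (S-H)|A|^2 + \tfrac12\langle x, e_k\rangle\nabla_k H + \Delta S$, and moving the drift term to the left gives $\mathcal{L}H = \tfrac12 H + (S-H)|A|^2 + \Delta S$.

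The main obstacle is getting the Simons/Codazzi contractions and the cancellation of the $\langle x, \nabla A\rangle$ terms exactly right — this is precisely the place where the shrinker computation of \cite{DL} uses $S\equiv 0$, and one must check that no new terms survive beyond the ones linear in $S$ and $\nabla^2 S$ that we have isolated. I would verify the cancellations term by term in a normal frame, using Codazzi's $\nabla_i A_{jk} = \nabla_j A_{ik}$ freely, and cross-check the final $S$-free parts against the quoted shrinker identities $\mathcal{L}|A|^2 = 2|\nabla A|^2 + 2|A|^2(\tfrac12 - |A|^2)$ and $\mathcal{L}H^2 = 2|\nabla H|^2 + 2H^2(\tfrac12 - |A|^2)$ as a consistency check.
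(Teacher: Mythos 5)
Your proposal is correct and follows essentially the same route as the paper: differentiate $H=-\tfrac12\langle x,\nu\rangle+S$ twice via Gauss--Weingarten to get \eqref{eq-Hij}, feed it into the Simons-type identity for $\Delta|A|^2$ (the paper quotes Ding--Lin's equation (2.2), which is just your $\mathcal{L}A_{ij}$ computation already contracted with $2A_{ij}$), cancel the drift terms against $\tfrac12\langle x,e_k\rangle\nabla_iA_{jk}$ via Codazzi, and trace for \eqref{eq-H}. One minor bookkeeping point: with the Weingarten convention $\nabla_i\nu=-A_{ik}e_k$ the Gauss formula must read $\nabla_ie_j=+A_{ij}\nu$ (since $\langle\nu,e_j\rangle=0$), so the $\langle x,\nu\rangle$ term in your intermediate display should carry a plus sign, which then yields $(S-H)A_{ik}A_{jk}$ directly; your final identities are all correct.
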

\emph{Proof:} First observe that we can use the Gauss-Weingarten relations to perform covariant differentiation on $H=-\frac{x\cdot\nu}{2}+S$ to get
$$\nabla_jH=\frac{1}{2}\langle x,e_k\rangle A_{jk}+\nabla_jS.$$
Differentiating and using Gauss-Weingarten once more, we calulate
$$
\nabla_i\nabla_jH=\frac{1}{2}A_{ij}+\frac{1}{2}\langle x,\nu\rangle A_{ik}A_{jk}+\frac{1}{2}\langle x,e_k\rangle\nabla_iA_{jk}+\nabla_i\nabla_jS,$$
and then substituting in $\frac{x\cdot\nu}{2}=S-H$ gives
$$
\nabla_i\nabla_jH=\frac{1}{2}A_{ij}+(S-H)A_{ik}A_{jk}+\frac{1}{2}\langle x,e_k\rangle\nabla_iA_{jk}+\nabla_i\nabla_jS,$$
which is \eqref{eq-Hij}.

We also immediately get another useful equation to refer back to later
\begin{equation}
\nabla_i\nabla_jS=\nabla_i\nabla_jH-\frac{1}{2}A_{ij}-(S-H)A_{ik}A_{jk}-\frac{1}{2}\langle x,e_k\rangle\nabla_iA_{jk}
\end{equation}
from \eqref{eq-Hij}.

At this point, we refer to a  standard calculation (which holds for general hypersurfaces), see Ding and Lin \cite{DL} equation (2.2) for details, which in codimension $1$ says that
$$\Delta|A|^2=2|\nabla A|^2+2A_{ij}\nabla_i\nabla_jH+2H\text{tr}(A^3)-2|A|^4.$$
Substituting in for $\nabla_i\nabla_jH$ using \eqref{eq-Hij} gives
\begin{equation*}
    \begin{split}
        \mathcal{L}|A|^2=&\Delta |A|^2-\frac{1}{2}\langle\nabla |A|^2,x\rangle\\
        =&2|\nabla A|^2+2A_{ij}\Big{[}\frac{1}{2}A_{ij}+(S-H)A_{ik}A_{jk}+\frac{1}{2}\langle x,e_k\rangle\nabla_iA_{jk}+\nabla_i\nabla_jS\Big{]}+2H\text{tr}(A^3)-2|A|^4-\frac{1}{2}\langle\nabla |A|^2,x\rangle\\
        =&2|\nabla A|^2+2|A|^2(\frac{1}{2}-|A|^2)+2S\text{tr}(A^3)+2A_{ij}\nabla_i\nabla_jS
    \end{split}
\end{equation*}
and this shows \eqref{eq-A2}.

We can also contract (14) with $g^{ij}$ to get
\begin{equation*}
    \begin{split}
        \mathcal{L}H=&g^{ij}\Big{[}\frac{1}{2}A_{ij}+(S-H)A_{ik}A_{jk}+\frac{1}{2}\langle x,e_k\rangle\nabla_iA_{jk}+\nabla_i\nabla_jS\Big{]}-\frac{1}{2}\langle x,\nabla H\rangle\\
        =&\frac{1}{2}H+(S-H)|A|^2+\Delta S
    \end{split}
\end{equation*}
which is \eqref{eq-H}. $\blacksquare$
\newline 
\newline
\textbf{Remark}
In the next subsection, we will compute the evolution of some geoemtric quantities with the geometric relationships we have established in this subsection. The unique usefulness of $\mathcal{L}$ does not end here, however, as its self-adjointness will be vital to establish the needed control of the evolution of our energy functional.
\subsection{Evolution of Geometric Quantities}

In this subsection we will use our calculations up to this point to compute the evolution equations of key geometric quantities for a family of hypersurfaces moving by \eqref{eq-normal-mcf} in the form that will be most beneficial for us throughout this paper. Key for us will be to express as much of the quantity as possible in terms of $S$ and its covariant derivatives $\nabla^lS$, which will be most useful for us later on as these quantities all converge to $0$ as $t\to\infty$.
\subsubsection{The Normalizer Lemma}
We shall prove a quick preliminary lemma that we shall refer to as the \emph{normalizer lemma} which allows us to easily transition from time evolutions of scalar geometric quantities with respect to \eqref{eq-mcf}, to time evolutions of scalar geometric quantities with respect to \eqref{eq-normal-mcf}.
\begin{lemma} \textbf{(Normalizer Lemma)}\label{lemma-normal} If $\varphi(p,t)$ is a smooth family of embeddings of a hypersurface $M$ moving according to \eqref{eq-mcf}, and we have that for some function $f(p,t)$ which is defined on $M$ and we have that $\psi(p,t)$ is a smooth family of diffeomorphisms such that $\varphi(\psi(p,t),t)$ is a smooth family of embeddings moving by \eqref{eq-normal-mcf}, where
\begin{equation}
\label{eq-diffeo}
D_p\varphi(\psi(p,t),t)(\frac{\partial\psi}{\partial t}(p,t))=-\frac{\varphi(\psi(p,t),t)^T}{2},
\end{equation}
then
$$\frac{d}{dt}f(\psi(p,t),t)=\frac{\partial}{\partial t}f(\psi(p,t),t)-\frac{1}{2}\langle x,\nabla f\rangle.$$
\end{lemma}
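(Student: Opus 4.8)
The plan is to differentiate $f(\psi(p,t),t)$ using the chain rule and then substitute the defining relation \eqref{eq-diffeo} for the time-derivative of the reparametrization $\psi$. Concretely, writing $x = \varphi(\psi(p,t),t)$ and treating $f$ as a scalar function on $M$ pulled back to the parameter domain, I would compute
\[
\frac{d}{dt} f(\psi(p,t),t) = \frac{\partial f}{\partial t}(\psi(p,t),t) + D_p f\big|_{\psi(p,t)}\!\left(\frac{\partial \psi}{\partial t}(p,t)\right),
\]
where the first term is the partial time derivative (i.e. $f$ differentiated in $t$ with the spatial slot held fixed along the flow \eqref{eq-mcf}) and the second term accounts for the motion of the base point induced by the diffeomorphism. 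The key observation is that $D_p f$ applied to a tangent vector is just the directional derivative of $f$, so if $w$ denotes the tangent vector $\frac{\partial\psi}{\partial t}$ pushed forward to $M_t \subset \mathbb{R}^{n+1}$ via $D_p\varphi$, then $D_p f(\tfrac{\partial\psi}{\partial t}) = \langle \nabla f, w\rangle$ where $\nabla f$ is the intrinsic gradient on $M_t$.

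The second step is to identify $w$ using \eqref{eq-diffeo}: that equation says precisely $D_p\varphi(\psi(p,t),t)\big(\tfrac{\partial\psi}{\partial t}\big) = -\tfrac{1}{2}\varphi(\psi(p,t),t)^T = -\tfrac{1}{2}x^T$, i.e. $w = -\tfrac{1}{2}x^T$, the tangential part of the position vector scaled by $-1/2$. Substituting,
\[
D_p f\left(\frac{\partial\psi}{\partial t}\right) = \left\langle \nabla f, -\frac{x^T}{2}\right\rangle = -\frac{1}{2}\langle x^T, \nabla f\rangle = -\frac{1}{2}\langle x, \nabla f\rangle,
\]
where the last equality holds because $\nabla f$ is already tangent to $M_t$, so pairing it with $x$ or with its tangential projection $x^T$ gives the same result. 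Combining with the first step yields exactly
\[
\frac{d}{dt} f(\psi(p,t),t) = \frac{\partial}{\partial t} f(\psi(p,t),t) - \frac{1}{2}\langle x, \nabla f\rangle,
\]
as claimed.

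The main thing to be careful about — rather than a genuine obstacle — is the bookkeeping distinction between the two notions of time derivative: $\frac{d}{dt}$ is the total derivative of the composition $t \mapsto f(\psi(p,t),t)$ (the rate of change seen by an observer moving with the \eqref{eq-normal-mcf} parametrization), while $\frac{\partial}{\partial t}f(\psi(p,t),t)$ should be read as the partial time derivative evaluated at the point $\psi(p,t)$, i.e. the rate of change along \eqref{eq-mcf} with the spatial label frozen. One must also confirm that the existence of the family $\psi$ satisfying the ODE \eqref{eq-diffeo} is legitimate — this is a standard first-order ODE in $p$ at each time, solvable since $M$ is closed (compactness gives completeness of the flow), so the hypothesis is not vacuous — but since the statement takes $\psi$ as given, I would simply note this in passing. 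No further estimates are needed; the lemma is a clean application of the chain rule once the geometric meaning of \eqref{eq-diffeo} is unpacked.
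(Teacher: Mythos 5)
Your proof is correct and is exactly the computation the paper has in mind: the paper's own proof simply states that the result "follows by computing the total derivative of $f(\psi(p,t),t)$," and you have carried out that chain-rule computation in detail, including the key identification of $D_p\varphi(\partial_t\psi)$ with $-\tfrac{1}{2}x^T$ via \eqref{eq-diffeo} and the observation that $\langle x^T,\nabla f\rangle=\langle x,\nabla f\rangle$ since $\nabla f$ is tangential. No discrepancy with the paper's approach.
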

\emph{Proof:} This is a straightforward computation that follows by  computing the total deriative of $f(\psi(p,t),t)$. Note that \eqref{eq-diffeo} defines diffeomorphisms up to which the flows \eqref{eq-mcf} and \eqref{eq-normal-mcf} are equivalent.$\blacksquare$\\

\subsubsection{Evolution Equations}
We are now ready to prove the following lemma. 

\begin{lemma}\label{lemma-evolutions1}
The following calculations hold true for any family of hypersurfaces $M_t\subset\mathbb{R}^{n+1}$ moving by \eqref{eq-normal-mcf}
\begin{equation}\label{eq-Aij-ev}
        \frac{\partial}{\partial t}A_{ij}=\mathcal{L}A_{ij}+(|A|^2-\frac{1}{2})A_{ij}-2SA_{ik}A_{jk}
    \end{equation}
\begin{equation}\label{eq-A2-ev}
        \frac{d}{dt}|A|^2=2A_{ij}\nabla_i\nabla_jS+2Str(A^3)
    \end{equation} 
\begin{equation}\label{eq-S-ev}
        \frac{d}{dt}S=\mathcal{L}S+(|A|^2+\frac{1}{2})S
    \end{equation}
 \begin{equation}\label{eq-weight-ev}
        \frac{\partial}{\partial t}e^{-\frac{|x|^2}{4}}d\mu=-S^2e^{-\frac{|x|^2}{4}}d\mu
    \end{equation}
\end{lemma}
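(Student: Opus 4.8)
The plan is to assemble all four evolution equations from the preliminary computations in Lemmas \ref{lemma-evolutions} and \ref{lemma-L}, using the Normalizer Lemma to pass between the time derivative $\partial_t$ along \eqref{eq-normal-mcf} and the "total" derivative $\tfrac{d}{dt}$ that the scalar quantities satisfy. First I would prove \eqref{eq-Aij-ev}: start from \eqref{eq-Aij}, which gives $\partial_t A_{ij}=\nabla_i\nabla_j S-SA_{il}A_{jl}$, and rewrite $\nabla_i\nabla_j S$ using the identity $\nabla_i\nabla_j S=\nabla_i\nabla_j H-\tfrac12 A_{ij}-(S-H)A_{ik}A_{jk}-\tfrac12\langle x,e_k\rangle\nabla_i A_{jk}$ from \eqref{eq-Hij}. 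Then invoke the standard Simons-type identity $\nabla_i\nabla_j H=\Delta A_{ij}+$ (curvature terms), which in codimension one and for a hypersurface in $\mathbb{R}^{n+1}$ reads $\Delta A_{ij}=\nabla_i\nabla_j H-H A_{ik}A_{kj}+|A|^2 A_{ij}$; combining these and reorganizing the $\langle x,e_k\rangle\nabla_i A_{jk}$ term into $-\tfrac12\langle x,\nabla A_{ij}\rangle$ yields $\partial_t A_{ij}=\mathcal{L}A_{ij}+(|A|^2-\tfrac12)A_{ij}-2SA_{ik}A_{jk}$, after the two copies of $SA_{ik}A_{jk}$ and $HA_{ik}A_{jk}$ combine correctly.

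Next, \eqref{eq-A2-ev} follows by differentiating $|A|^2=g^{ik}g^{jl}A_{ij}A_{kl}$ in time: the $\partial_t g^{-1}$ terms contribute $4 S A^{ij}A_{ik}A_{kj}$-type contractions from \eqref{eq-g-inv}, and the $\partial_t A_{ij}$ term contributes $2A^{ij}(\nabla_i\nabla_j S-SA_{il}A_{jl})$ from \eqref{eq-Aij}. Here I must be careful whether the statement uses $\partial_t$ or $\tfrac{d}{dt}$: since $|A|^2$ is a scalar, the Normalizer Lemma relates the two by $\tfrac{d}{dt}|A|^2=\partial_t|A|^2-\tfrac12\langle x,\nabla|A|^2\rangle$; equation \eqref{eq-A2-ev} as written has no $\mathcal{L}$ on the left and no transport term, so I expect the bookkeeping to show that the $-\tfrac12\langle x,\nabla|A|^2\rangle$ term is exactly what is needed to cancel stray first-derivative terms, leaving $\tfrac{d}{dt}|A|^2=2A_{ij}\nabla_i\nabla_j S+2S\,\mathrm{tr}(A^3)$. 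For \eqref{eq-S-ev}, the cleanest route is: $S=H+\tfrac{x\cdot\nu}{2}$, so $\partial_t S=\partial_t H+\tfrac12\langle\partial_t\varphi,\nu\rangle+\tfrac12\langle x,\partial_t\nu\rangle=\partial_t H+\tfrac12 S-\tfrac12\langle x,\nabla S\rangle$ using \eqref{eq-nu} and $\partial_t\varphi=S\nu$. Then use the evolution of $H$ under \eqref{eq-normal-mcf} — which I obtain by contracting \eqref{eq-Aij-ev} with $g^{ij}$ and adding the $\partial_t g^{ij}$ correction, giving $\partial_t H=\mathcal{L}H+(|A|^2-\tfrac12)H+\cdots$ — together with \eqref{eq-H}, $\mathcal{L}H=\tfrac12 H+(S-H)|A|^2+\Delta S$, so that the $H$-terms collapse and, after applying the Normalizer Lemma to convert $\partial_t S-\tfrac12\langle x,\nabla S\rangle=\tfrac{d}{dt}S$, everything reduces to $\tfrac{d}{dt}S=\Delta S-\tfrac12\langle x,\nabla S\rangle+(|A|^2+\tfrac12)S=\mathcal{L}S+(|A|^2+\tfrac12)S$. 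Finally, \eqref{eq-weight-ev} is the shortest: $\partial_t d\mu=-S H\,d\mu$ is immediate from \eqref{eq-g} (the trace of $-2A_{ij}S$ gives $-\tfrac12\cdot 2HS$), while $\partial_t e^{-|x|^2/4}=-\tfrac12\langle x,\partial_t\varphi\rangle e^{-|x|^2/4}=-\tfrac12\langle x,\nu\rangle S\, e^{-|x|^2/4}$; summing the two exponents gives $-S(H+\tfrac{x\cdot\nu}{2})=-S^2$, hence $\partial_t(e^{-|x|^2/4}d\mu)=-S^2 e^{-|x|^2/4}d\mu$.

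The main obstacle I anticipate is the index-chasing in \eqref{eq-Aij-ev}: one has to combine the Simons identity for $\nabla_i\nabla_j H$, the substitution \eqref{eq-Hij}, and the definition of $\mathcal{L}$ acting on a tensor (i.e. $\mathcal{L}A_{ij}=\Delta A_{ij}-\tfrac12\langle x,\nabla A_{ij}\rangle$), making sure the cubic terms $SA_{ik}A_{kj}$, $HA_{ik}A_{kj}$, and the $|A|^2 A_{ij}$ terms all land with the right coefficients and the $(S-H)$ splitting does not leave a residual $H$-dependence — the final answer must be expressed purely in terms of $A$, $|A|^2$, and $S$. A secondary subtlety, flagged above, is consistently tracking which equations are stated with $\partial_t$ versus $\tfrac{d}{dt}$ and inserting the Normalizer Lemma correction $-\tfrac12\langle x,\nabla(\cdot)\rangle$ at exactly the right moments; getting this wrong would produce spurious transport terms. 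Once \eqref{eq-Aij-ev} is in hand, the remaining three are essentially contractions and product-rule computations, and I would present them briefly.
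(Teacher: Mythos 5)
Your proposal is correct, and for \eqref{eq-Aij-ev} and \eqref{eq-weight-ev} it is essentially identical to the paper's proof: combine \eqref{eq-Aij} with the expression for $\nabla_i\nabla_jS$ coming from \eqref{eq-Hij}, apply Simons' identity, and absorb the $-\tfrac12\langle x,e_k\rangle\nabla_kA_{ij}$ term into $\mathcal{L}A_{ij}$; for the weighted measure, trace \eqref{eq-g} and differentiate the Gaussian weight exactly as you describe. Where you genuinely diverge is in \eqref{eq-A2-ev} and the evolution of $H$ used for \eqref{eq-S-ev}: the paper quotes the standard evolution equations for $|A|^2$ and $H$ under the rescaled flow \eqref{eq-mcf-rescaled}, rewrites them using \eqref{eq-Hij}--\eqref{eq-H}, and then invokes the Normalizer Lemma \ref{lemma-normal} to pass to the flow \eqref{eq-normal-mcf}; you instead differentiate $|A|^2=g^{ik}g^{jl}A_{ij}A_{kl}$ and $H=g^{ij}A_{ij}$ directly using \eqref{eq-g-inv} and \eqref{eq-Aij}, which is more self-contained and, if you carry it out, yields $\partial_t|A|^2=2A_{ij}\nabla_i\nabla_jS+2S\,\mathrm{tr}(A^3)$ and $\partial_tH=\Delta S+|A|^2S$ with no residual transport term at all. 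This resolves the bookkeeping worry you flag: on the direct route the Normalizer Lemma is not needed anywhere --- the $-\tfrac12\langle x,\nabla S\rangle$ in \eqref{eq-S-ev} arises from $\tfrac12\langle x,\partial_t\nu\rangle$ via \eqref{eq-nu}, not from a change of parametrization --- so your remark that one must ``apply the Normalizer Lemma to convert $\partial_tS-\tfrac12\langle x,\nabla S\rangle$ into $\tfrac{d}{dt}S$'' is a misattribution rather than an error; the arithmetic you write down is already correct as a computation along \eqref{eq-normal-mcf}. The trade-off is that the paper's route leans on citable standard formulas while yours requires trusting the tensor evolutions \eqref{eq-g-inv} and \eqref{eq-Aij} already established in Lemma \ref{lemma-evolutions}; both are sound.
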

\emph{Proof:} Equation \eqref{eq-Aij} can be combined with \eqref{eq-A2} to calculate that 
$$\frac{\partial}{\partial t}A_{ij}=\nabla_i\nabla_jH-\frac{1}{2}A_{ij}-(S-H)A_{ik}A_{jk}-\frac{1}{2}\langle x,e_k\rangle\nabla_iA_{jk}-SA_{jl}A_{il}$$
which, after we use Simon's Identity 
$$\nabla_i\nabla_jH=\Delta A_{ij}+|A|^2A_{ij}-HA_{il}A_{lj},$$
yields
\begin{equation*}
    \begin{split}
        \frac{\partial}{\partial t}A_{ij}=&\Delta A_{ij}+(|A|^2-\frac{1}{2})A_{ij}-\frac{1}{2}\langle x,e_k\rangle\nabla_kA_{ij}-2SA_{ik}A_{jk}\\
        =&\mathcal{L}A_{ij}+(|A|^2-\frac{1}{2})A_{ij}-2SA_{ik}A_{jk},
    \end{split}
\end{equation*}
which is \eqref{eq-Aij-ev}.

One may refer to standard calculations for the rescaled mean curvature flow which give that for a family of hypersurfaces moving by \eqref{eq-mcf} we have
$$\frac{d}{dt}|A|^2=-|A|^2+\Delta|A|^2-2|\nabla A|^2+2|A|^4$$
see e.g. \cite{Ma}, and we can combine this with \eqref{eq-A2} and Simon's identity to calculate that 
\begin{equation*}
    \begin{split}2A_{ij}\nabla_i\nabla_jS=&2A_{ij}\nabla_i\nabla_jH-|A|^2-2(S-H)tr(A^3)-\frac{1}{2}\langle x,\nabla|A|^2\rangle\\
    =&2A_{ij}\Delta A_{ij}+2|A|^4-|A|^2-2Str(A^3)-\frac{1}{2}\langle x,\nabla|A|^2\rangle\\
    =&\frac{d}{dt}|A|^2-2Str(A^3)-\frac{1}{2}\langle x,\nabla|A|^2\rangle,
    \end{split}
\end{equation*}
where in \emph{this above} equation $\frac{d}{dt}|A|^2$ is performed with respect to the rescaled mean curvature for \eqref{eq-mcf-rescaled} as opposed to the normal rescaled mean curvature flow \eqref{eq-normal-mcf}. We apply the normalizer lemma to this equation and calculate that for a family of hypersurfaces moving by \eqref{eq-normal-mcf}
\begin{equation*}
    \begin{split}
        \frac{d}{dt}|A|^2=&2A_{ij}\nabla_i\nabla_jS+2Str(A^3)
    \end{split}
\end{equation*}
which is \eqref{eq-A2-ev}.

One may again refer to standard calculations for the rescaled mean curvature flow which give that for a family of hypersurfaces moving by \eqref{eq-mcf-rescaled} we have
$$\frac{d}{dt}H=-\frac{1}{2}H+\Delta H+H|A|^2$$
see e.g. \cite{Ma}. We combine this with (11) and Simon's identity to calculate that
\begin{equation*}
    \begin{split}
        \Delta S=&g^{ij}\nabla_i\nabla_jS\\
        =&g^{ij}\Big{[}\nabla_i\nabla_jH-\frac{1}{2}A_{ij}-(S-H)A_{ik}A_{jk}-\frac{1}{2}\langle x,e_k\rangle\nabla_iA_{jk}\Big{]}\\
        =&-\frac{1}{2}H-(S-H)|A|^2-\frac{1}{2}\langle x, \nabla H\rangle\\
        &+g^{ij}\Big{[}\Delta A_{ij}+|A|^2A_{ij}-HA_{il}A_{lj}\Big{]}\\
        =&-\frac{1}{2}H-(S-H)|A|^2-\frac{1}{2}\langle x, \nabla H\rangle\\
        &+\Delta H+|A|^2H-H|A|^2\\
        =&\frac{d}{dt}H-\frac{1}{2}\langle x, \nabla H\rangle-S|A|^2,
    \end{split}
\end{equation*}
where here again we mean $\frac{d}{dt}H$ with respect to the rescaled mean curvature flow \eqref{eq-mcf-rescaled}. We apply Lemma \ref{lemma-normal} to this equation to calculate that, for a family of hypersurfaces moving by \eqref{eq-normal-mcf}, we have
\begin{equation*}
\begin{split}
\frac{d}{dt}H=&\Delta S+|A|^2S.
\end{split}
\end{equation*}
\eqref{eq-normal-mcf} and \eqref{eq-nu} combined give
\begin{equation*}
    \frac{\partial}{\partial t}\frac{x\cdot\nu}{2}=\frac{S}{2}-\langle\frac{x}{2},\nabla S\rangle.
\end{equation*}
Combining our last two calculations gives
\begin{equation*}
    \begin{split}
        \frac{\partial}{\partial t}S=&\frac{\partial}{\partial t}[H+\frac{x\cdot\nu}{2}]\\
        =&\Delta S+|A|^2S+\frac{S}{2}-\langle\frac{x}{2},\nabla S\rangle\\
        =&\mathcal{L}S+(|A|^2+\frac{1}{2})S,
    \end{split}
\end{equation*}
which is \eqref{eq-S-ev}.

We calculate, using a standard formula for the rate of change of the measure $d\mu$, that
\begin{equation*}
    \frac{d}{dt}d\mu=\frac{1}{2}g^{ij}\frac{d}{dt}g_{ij}d\mu=-SHd\mu.
\end{equation*}
This can be used to calculate that 
\begin{equation*}
\begin{split}
\frac{\partial}{\partial t}e^{-\frac{|x|^2}{4}}d\mu=&(-SH)e^{-\frac{|x|^2}{4}}d\mu+\frac{\partial}{\partial t}[e^{-\frac{|x|^2}{4}}]d\mu\\
=&(-SH-\frac{1}{4}\frac{\partial}{\partial t}[\big{<}x,x\big{>}])e^{-\frac{|x|^2}{4}}d\mu\\
=&(-SH-\frac{1}{2}\big{<}H\nu+\frac{x^\perp}{2},x\big{>})e^{-\frac{|x|^2}{4}}d\mu\\
=&(-SH-\frac{1}{2}S\big{<}\nu,x\big{>})e^{-\frac{|x|^2}{4}}d\mu\\
=&-S^2e^{-\frac{|x|^2}{4}}d\mu,
\end{split}
\end{equation*}
which is \eqref{eq-weight-ev}. $\blacksquare$

\subsection{Second Derivatives}
The final thing we will need to move on is a second derivative of $S$, as this will be necessary to compute the second derivative of our energy functional $E(t)$ and this will be necessary for the analysis of our energy functional.
\begin{lemma}
We have for a family of hypersurfaces $M_t\subset\mathbb{R}^{n+1}$ moving by \eqref{eq-normal-mcf} that
\begin{equation}\label{eq-dt2-S}
    \begin{split}
        \frac{d^2}{dt^2}S=&2SA_{ij}\nabla_i\nabla_jS+2A^{jk}\nabla_jS\nabla_kS-H|\nabla S|^2+Sg^{kl}\nabla_lS\nabla_kH\\
        &+S(2A_{ij}\nabla_i\nabla_jS+2S\text{tr}(A^3))\\
        &+\mathcal{L}\Big{[}\mathcal{L}S+(|A|^2+\frac{1}{2})S\Big{]}+(|A|^2+\frac{1}{2})\Big{[}\mathcal{L}S+(|A|^2+\frac{1}{2})S\Big{]}
    \end{split}
\end{equation}
\end{lemma}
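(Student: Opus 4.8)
The plan is to differentiate the first-order evolution equation \eqref{eq-S-ev}, $\frac{d}{dt}S=\mathcal{L}S+(|A|^2+\tfrac12)S$, once more in $t$ and then rewrite every resulting term using the evolution equations of Lemmas \ref{lemma-evolutions}, \ref{lemma-L} and \ref{lemma-evolutions1}. Applying $\frac{d}{dt}$ and the product rule yields
\[
\frac{d^2}{dt^2}S=\frac{d}{dt}\big(\mathcal{L}S\big)+\Big(\frac{d}{dt}|A|^2\Big)S+\Big(|A|^2+\tfrac12\Big)\frac{d}{dt}S.
\]
Three of the blocks of \eqref{eq-dt2-S} follow by pure substitution: \eqref{eq-A2-ev} rewrites $(\tfrac{d}{dt}|A|^2)S$ as $S\big(2A_{ij}\nabla_i\nabla_jS+2S\,\mathrm{tr}(A^3)\big)$; \eqref{eq-S-ev} rewrites $(|A|^2+\tfrac12)\tfrac{d}{dt}S$ as $(|A|^2+\tfrac12)\big[\mathcal{L}S+(|A|^2+\tfrac12)S\big]$; and, writing $\frac{d}{dt}(\mathcal{L}S)=\mathcal{L}\big(\tfrac{d}{dt}S\big)+\big[\tfrac{d}{dt},\mathcal{L}\big]S$ and inserting \eqref{eq-S-ev} into the first summand, the term $\mathcal{L}\big[\mathcal{L}S+(|A|^2+\tfrac12)S\big]$ appears. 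Thus the whole computation reduces to showing that the commutator $\big[\tfrac{d}{dt},\mathcal{L}\big]S$ equals the first line of \eqref{eq-dt2-S}.

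To compute $\big[\tfrac{d}{dt},\mathcal{L}\big]S$ I would split $\mathcal{L}=\Delta-\tfrac12\langle x,\nabla\,\cdot\,\rangle$ and handle the two pieces separately. For the Laplace–Beltrami part, $\frac{d}{dt}(g^{ij}\nabla_i\nabla_jS)=(\partial_tg^{ij})\nabla_i\nabla_jS+g^{ij}\nabla_i\nabla_j\big(\tfrac{d}{dt}S\big)-\big(g^{ij}\partial_t\Gamma^k_{ij}\big)\nabla_kS$: the first term gives $2SA_{ij}\nabla_i\nabla_jS$ by \eqref{eq-g-inv}, and contracting the evolution of the Christoffel symbols \eqref{eq-Gamma} with $g^{ij}$, re-expressing it through covariant derivatives of $SA$, and applying the Codazzi identity $\nabla^jA_{jk}=\nabla_kH$ produces the terms $2A^{jk}\nabla_jS\nabla_kS$, $-H|\nabla S|^2$ and $Sg^{kl}\nabla_lS\nabla_kH$. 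For the drift part I would differentiate $\langle x,\nabla S\rangle$ directly, using $\tfrac{\partial\varphi}{\partial t}=S\nu$ from \eqref{eq-normal-mcf}, the Gauss–Weingarten relation for $\partial_t(\partial_k\varphi)=\partial_k(S\nu)$, \eqref{eq-nu}, and $\partial_tg^{kl}=2SA^{kl}$; any leftover factors of $\langle x,\nu\rangle$ and $\langle x,e_k\rangle A_{jk}$ I would clear using the identities $\nabla_jH=\tfrac12\langle x,e_k\rangle A_{jk}+\nabla_jS$ and $\tfrac{x\cdot\nu}{2}=S-H$ established in the proof of Lemma \ref{lemma-L}, together with Simons' identity $\nabla_i\nabla_jH=\Delta A_{ij}+|A|^2A_{ij}-HA_{il}A_{lj}$ when second derivatives of $H$ must be converted. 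Assembling the Laplacian and drift contributions reproduces the first line of \eqref{eq-dt2-S}, and combining with the three substitution blocks above yields \eqref{eq-dt2-S}.

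The hard part is not any individual identity but the bookkeeping inside $\big[\tfrac{d}{dt},\mathcal{L}\big]S$: one must keep the partial time derivative (which commutes with coordinate derivatives) distinct from the material derivative along \eqref{eq-normal-mcf}, remember that the drift vector of $\mathcal{L}$ is the moving position $x=\varphi$ rather than a fixed field, and systematically trade every surviving $x$-dependent quantity for $S$, $H$ and their covariant derivatives. The delicate cancellations occur among the $\langle x,\nu\rangle$- and $\langle x,e_k\rangle A_{jk}$-terms, so it is prudent to sanity-check the final formula by specializing to a shrinker, where $S\equiv0$ and both sides of \eqref{eq-dt2-S} must vanish identically.
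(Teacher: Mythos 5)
Your proposal follows essentially the same route as the paper: differentiate \eqref{eq-S-ev} once more, substitute \eqref{eq-A2-ev} and \eqref{eq-S-ev} into the two zeroth-order product-rule blocks, and obtain the first line of \eqref{eq-dt2-S} from the commutator of $\tfrac{d}{dt}$ with the Laplacian, computed via $\partial_t g^{ij}$ and the contraction $g^{ij}\partial_t\Gamma^k_{ij}$ together with Codazzi. The only difference is one of care rather than of method: you explicitly flag the drift commutator $\big[\tfrac{d}{dt},\tfrac12\langle x,\nabla\,\cdot\,\rangle\big]S$ as a term to be computed (using $\partial_t x=S\nu$, \eqref{eq-nu}, and the Gauss--Weingarten relations), whereas the paper silently rewrites $\Delta\big[\mathcal{L}S+(|A|^2+\tfrac12)S\big]-\tfrac12\partial_t\langle x,\nabla S\rangle$ as $\mathcal{L}\big[\mathcal{L}S+(|A|^2+\tfrac12)S\big]$, i.e.\ treats that commutator as vanishing; carrying out your more explicit bookkeeping is the right way to confirm (or correct) that step.
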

\emph{Proof:} We saw in the previous subsection, equation \eqref{eq-S-ev}, that
$$\frac{d}{dt}S=\mathcal{L}S+(|A|^2+\frac{1}{2})S,$$
so to compute $\frac{d^2}{dt^2}S$, we have all we need with the exception of $\frac{d}{dt}\Delta S$. As such, we calculate that
\begin{equation*}
    \begin{split}
        \frac{\partial}{\partial t}\Delta S=&\frac{\partial}{\partial t}\Big{[}g^{ij}\nabla_i\nabla_j S\Big{]}\\
        =&\frac{\partial}{\partial t}\Big{[}g^{ij}\big{(}\frac{\partial}{\partial x_i}\frac{\partial}{\partial x_j}S-\Gamma^k_{ij}\nabla_kS\big{)}\Big{]}\\
        =&\frac{\partial}{\partial t}g^{ij}\big{(}\frac{\partial}{\partial x_i}\frac{\partial}{\partial x_j}S-\Gamma^k_{ij}\nabla_kS\big{)}\\
        &+\Delta\frac{\partial}{\partial t}S\\
        &-g^{ij}\frac{\partial}{\partial t}\Gamma^k_{ij}\nabla_k S.
    \end{split}
\end{equation*}
Using Lemma \ref{lemma-evolutions}, equations \eqref{eq-g-inv}, \eqref{eq-S-ev}, Simon's identity, Codazzi's equation, and normal coordinates at a point we compute,
\begin{equation}\label{eq-ev-lap-S}
    \begin{split}
        \frac{\partial}{\partial t}\Delta S=&\frac{\partial}{\partial t}g^{ij}\big{(}\nabla_i\nabla_jS\big{)}+\Delta\frac{\partial}{\partial t}S-g^{ij}\frac{\partial}{\partial t}\Gamma^k_{ij}\nabla_k S\\
        =&2SA^{ij}\nabla_i\nabla_jS\\
        &+\Delta\Big{[}\mathcal{L}S+(|A|^2+\frac{1}{2})S\Big{]}\\
        &+g^{ij}g^{kl}\Big{[}\frac{\partial}{\partial x_j}\big{(}A_{il}S\big{)}+\frac{\partial}{\partial x_i}\big{(}A_{jl}S\big{)}-\frac{\partial}{\partial x_l}\big{(}A_{ij}S\big{)}\Big{]}\nabla_kS\\
        =&2SA_{ij}\nabla_i\nabla_jS\\
        &+\Delta\Big{[}\mathcal{L}S+(|A|^2+\frac{1}{2})S\Big{]}\\
        &+A^{jk}\nabla_jS\nabla_kS+A^{ik}\nabla_iS\nabla_kS-H|\nabla S|^2\\
        &+2Sg^{ij}\nabla_jA_{il}\nabla_lS-Sg^{kl}\nabla_lS\nabla_kH\\
        =&2SA_{ij}\nabla_i\nabla_jS+2A^{jk}\nabla_jS\nabla_kS\\
        &-H|\nabla S|^2+Sg^{kl}\nabla_lS\nabla_kH\\
        &+\Delta\Big{[}\mathcal{L}S+(|A|^2+\frac{1}{2})S\Big{]}.\\
    \end{split}
\end{equation}
With \eqref{eq-ev-lap-S} computed, we can use it to plug in, calculating $\frac{d^2}{dt^2}S$ as follows

\begin{equation*}
    \begin{split}
        \frac{\partial}{\partial t}\Big{[}\mathcal{L}S+(|A|^2+\frac{1}{2})S\Big{]}=&\frac{\partial}{\partial t}\Delta S-\frac{1}{2}\frac{\partial}{\partial t}\langle x, \nabla S\rangle+S\frac{\partial}{\partial t}|A|^2+(|A|^2+\frac{1}{2})\frac{\partial}{\partial t}S\\
        =&2SA_{ij}\nabla_i\nabla_jS+2A^{jk}\nabla_jS\nabla_kS-H|\nabla S|^2+Sg^{kl}\nabla_lS\nabla_kH\\
        &+\Delta\Big{[}\mathcal{L}S+(|A|^2+\frac{1}{2})S\Big{]}-\frac{1}{2}\frac{\partial}{\partial t}\langle x, \nabla S\rangle\\
        &+S(2A_{ij}\nabla_i\nabla_jS+2S\text{tr}(A^3))\\
        &+(|A|^2+\frac{1}{2})\Big{[}\mathcal{L}S+(|A|^2+\frac{1}{2})S\Big{]}\\
        =&2SA_{ij}\nabla_i\nabla_jS+2A^{jk}\nabla_jS\nabla_kS-H|\nabla S|^2+Sg^{kl}\nabla_lS\nabla_kH\\
        &+S(2A_{ij}\nabla_i\nabla_jS+2S\text{tr}(A^3))\\
        &+\mathcal{L}\Big{[}\mathcal{L}S+(|A|^2+\frac{1}{2})S\Big{]}+(|A|^2+\frac{1}{2})\Big{[}\mathcal{L}S+(|A|^2+\frac{1}{2})S\Big{]}\\
    \end{split}
\end{equation*}
and this is \eqref{eq-dt2-S}. $\blacksquare$

\section{Energy Functional}\label{sec-energy}
In this section we introduce our energy functional and analyze it to control its long time behavior under the flow \eqref{eq-normal-mcf}.\\
\begin{definition} We define our energy functional $E(t)$ for a family of hypersurfaces $M_t\subset\mathbb{R}^{n+1}$ via the following formula
\begin{equation}
    E(t)=\int_{M_t}S^2e^{-\frac{|x|^2}{4}}d\mu.
\end{equation}
\end{definition}
\par One might recognize this as the negative right hand side of the rescaled Huisken's monotonicity formula (3), that is, we can write 
\begin{equation}
    \int_{M_t}e^{-\frac{|x|^2}{4}}d\mu_t-\int_{M_s}e^{-\frac{|x|^2}{4}}d\mu_s=\int_t^s E(\xi)d\xi,
\end{equation}
but it is, in a sense, physical as well, regarding $S$ as the ``speed" of \eqref{eq-normal-mcf} and $e^{-\frac{|x|^2}{4}}d\mu$ as the ``weight".

\subsection{Evolution of the Energy Functional}
In this section, we will continue to work under the assumption that $M_t$, or better yet $\varphi(M,t)$ is a smooth family of hypersurfaces evolving by the normal rescaled mean curvature flow \eqref{eq-normal-mcf}. In what follows we compute the evolution equation for $E(t)$. Much of the hard work has already been done for us in section 2.

As this quantity makes frequent appearences we will frequently refer to the weighted measure as
$$d\tilde{\mu}=e^{-\frac{|x|^2}{4}}d\mu$$
for cleanliness in our computations.
\begin{lemma}\label{lemma-energy-evolution}
For a family of hypersurfaces $M_t\subset\mathbb{R}^{n+1}$ moving by the normal rescaled mean curvature flow \eqref{eq-normal-mcf}, we have that
\begin{equation}\label{eq-energy-ev}
    \begin{split}
        \frac{\partial}{\partial t}E(t)=&-\int S^4d\tilde{\mu}+2\int S\Big{[}\mathcal{L}S+(|A|^2+\frac{1}{2})S\Big{]}d\tilde{\mu}
    \end{split}
\end{equation}
\end{lemma}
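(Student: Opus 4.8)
The plan is to differentiate $E(t)=\int_{M_t}S^2\,d\tilde\mu$ directly under the integral sign, using the two evolution identities already established in Lemma~\ref{lemma-evolutions1}, namely \eqref{eq-S-ev} for $\frac{d}{dt}S$ and \eqref{eq-weight-ev} for $\frac{\partial}{\partial t}(e^{-|x|^2/4}d\mu)$. Since $M$ is closed and $\widetilde\varphi$ is smooth, the integrand $S^2 e^{-|x|^2/4}$ and its $t$-derivative are smooth and bounded uniformly on compact time intervals, so differentiation under the integral is legitimate and we may write
\begin{equation*}
\frac{\partial}{\partial t}E(t)=\int_{M_t}2S\,\frac{\partial S}{\partial t}\,d\tilde\mu+\int_{M_t}S^2\,\frac{\partial}{\partial t}\big(e^{-\frac{|x|^2}{4}}d\mu\big).
\end{equation*}
Here the time derivative of $S$ that appears is exactly the material derivative $\tfrac{d}{dt}S$ along the flow computed in \eqref{eq-S-ev}, because $M_t$ is parametrized by $\varphi(\cdot,t)$ and the integral is pulled back to the fixed manifold $M$.

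Next I would substitute. By \eqref{eq-weight-ev}, $\frac{\partial}{\partial t}\big(e^{-\frac{|x|^2}{4}}d\mu\big)=-S^2 e^{-\frac{|x|^2}{4}}d\mu=-S^2\,d\tilde\mu$, so the second term becomes $-\int_{M_t}S^4\,d\tilde\mu$. By \eqref{eq-S-ev}, $\frac{\partial S}{\partial t}=\mathcal{L}S+(|A|^2+\tfrac12)S$, so the first term becomes $2\int_{M_t}S\big[\mathcal{L}S+(|A|^2+\tfrac12)S\big]\,d\tilde\mu$. Adding the two gives
\begin{equation*}
\frac{\partial}{\partial t}E(t)=-\int_{M_t}S^4\,d\tilde\mu+2\int_{M_t}S\big[\mathcal{L}S+(|A|^2+\tfrac12)S\big]\,d\tilde\mu,
\end{equation*}
which is \eqref{eq-energy-ev}.

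There is no serious obstacle here: the content of the lemma is entirely contained in the prior computations of $\tfrac{d}{dt}S$ and $\tfrac{\partial}{\partial t}d\tilde\mu$, and the only points requiring a word of care are (i) justifying differentiation under the integral via smoothness and closedness of $M$, and (ii) noting that the $\tfrac{\partial}{\partial t}$ acting on $S$ inside the integral coincides with the material derivative $\tfrac{d}{dt}$ of \eqref{eq-S-ev}. As a consistency check one can observe that $\tfrac{\partial}{\partial t}E(t)$ should equal $-\tfrac{d}{dt}\big(\tfrac{d}{dt}\int_{M_t}e^{-|x|^2/4}d\mu\big)$ via \eqref{eq-weight-ev} again, but the direct substitution above is the cleanest route and is the one I would present.
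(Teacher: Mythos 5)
Your proposal is correct and follows exactly the paper's own argument: differentiate under the integral, then substitute \eqref{eq-S-ev} for $\frac{\partial}{\partial t}S$ and \eqref{eq-weight-ev} for $\frac{\partial}{\partial t}\big(e^{-|x|^2/4}d\mu\big)$. The added remarks on justifying differentiation under the integral and on identifying the material derivative are sensible but not points the paper dwells on.
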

\emph{Proof:} We calculate directly from the definition of $E(t)$ that 
$$\frac{\partial}{\partial t}E(t)=2\int_{M_t}S\frac{\partial}{\partial t}Se^{-\frac{|x|^2}{4}}d\mu+\int_{M_t}S^2\frac{\partial}{\partial t}[e^{-\frac{|x|^2}{4}}d\mu]$$
and use \eqref{eq-S-ev} to plug in for $\frac{\partial}{\partial t}S$ and use \eqref{eq-weight-ev} to plug in for $\frac{\partial}{\partial t}[e^{-\frac{|x|^2}{4}}d\mu]$.

We obtain 
\begin{equation*}
    \begin{split}
        \frac{\partial}{\partial t}E(t)=&-\int S^4d\tilde{\mu}+2\int S\Big{[}\mathcal{L}S+(|A|^2+\frac{1}{2})S\Big{]}d\tilde{\mu}
    \end{split}
\end{equation*}
which gives \eqref{eq-energy-ev}. $\blacksquare$
\\
\newline We also would like to compute the second derivative of the energy functional, which we do in the following lemma.

\begin{lemma}\label{lemma-energy-der2}
For a family of hypersurfaces $M_t\subset\mathbb{R}^{n+1}$ moving by the normal rescaled mean curvature flow \eqref{eq-normal-mcf}, we have that
\begin{equation}\label{eq-energy-der2}
    \begin{split}
        \Ddot{E}(t)=&\int \Big{[}-S^3+2\Big{(}\mathcal{L}S+(|A|^2+\frac{1}{2})S\Big{)}\Big{]}^2d\tilde{\mu}\\
        &-2\int S^3\Big{[}\mathcal{L}S+(|A|^2+\frac{1}{2})S\Big{]}d\tilde{\mu}+2\int S\Big{[}4SA_{ij}\nabla_i\nabla_jS+Sg^{kl}\nabla_lS\nabla_kH+2S^2\text{tr}(A^3)\Big{]}d\tilde{\mu}\\
        &+2\int S\Big{[}2A^{jk}\nabla_jS\nabla_kS-H|\nabla S|^2\Big{]}d\tilde{\mu}
    \end{split}
\end{equation}
\end{lemma}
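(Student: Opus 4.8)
The plan is to differentiate the first-derivative formula \eqref{eq-energy-ev} for $E(t)$ in time, handling the three terms of the integrand $\frac{\partial}{\partial t}\big[S\,(\tfrac{d}{dt}S)\,d\tilde\mu\big]$ separately, and to feed in the second-derivative formula \eqref{eq-dt2-S} for $\frac{d^2}{dt^2}S$ together with \eqref{eq-weight-ev} and the self-adjointness of $\mathcal{L}$. Concretely, starting from
$$
\dot E(t)=2\int_{M_t} S\big[\mathcal{L}S+(|A|^2+\tfrac12)S\big]\,d\tilde\mu-\int_{M_t}S^4\,d\tilde\mu,
$$
write $\dot E(t)=\int_{M_t} \big(2S\cdot \tfrac{d}{dt}S - S^3\cdot S\big)\,d\tilde\mu=\int_{M_t}\big(2S\,\tfrac{d}{dt}S\big)d\tilde\mu-\int_{M_t}S^4\,d\tilde\mu$, recalling $\tfrac{d}{dt}S=\mathcal{L}S+(|A|^2+\tfrac12)S$ from \eqref{eq-S-ev}. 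A cleaner bookkeeping device: since $\tfrac{\partial}{\partial t}d\tilde\mu=-S^2\,d\tilde\mu$ by \eqref{eq-weight-ev}, we have for any scalar $F$ that $\tfrac{d}{dt}\int F\,d\tilde\mu=\int\big(\tfrac{d}{dt}F - S^2F\big)\,d\tilde\mu$. Apply this with $F=2S\,\tfrac{d}{dt}S-S^4$.

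First I would compute $\tfrac{d}{dt}\big(2S\,\tfrac{d}{dt}S\big)=2(\tfrac{d}{dt}S)^2+2S\,\tfrac{d^2}{dt^2}S$; the first piece is exactly $2\big[\mathcal{L}S+(|A|^2+\tfrac12)S\big]^2$, which combines with the $-S^2\cdot 2S\tfrac{d}{dt}S=-2S^3\tfrac{d}{dt}S$ term and the $-S^2(-S^4)=+S^6$ correction (wait — more carefully, $-S^2F$ with $F=2S\tfrac{d}{dt}S-S^4$ gives $-2S^3\tfrac{d}{dt}S+S^6$, and $\tfrac{d}{dt}(-S^4)=-4S^3\tfrac{d}{dt}S$). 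Collecting the terms not involving $\tfrac{d^2}{dt^2}S$ should reproduce, after completing the square, the first two integrals on the right of \eqref{eq-energy-der2}, namely $\int\big[-S^3+2(\mathcal{L}S+(|A|^2+\tfrac12)S)\big]^2 d\tilde\mu - 2\int S^3\big[\mathcal{L}S+(|A|^2+\tfrac12)S\big]d\tilde\mu$; one checks $\big[-S^3+2\tfrac{d}{dt}S\big]^2 = S^6 - 4S^3\tfrac{d}{dt}S + 4(\tfrac{d}{dt}S)^2$, which matches $2(\tfrac{d}{dt}S)^2$ (from $2S\tfrac{d^2}{dt^2}S$'s sibling) plus the $S^6$, $-4S^3\tfrac{d}{dt}S$ bookkeeping once a factor of $2$ is tracked — so some care with the factor of $2$ is needed, likely the square term should be written without an overall $2$, matching \eqref{eq-energy-der2} as stated. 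Then the remaining contribution is $2\int S\,\tfrac{d^2}{dt^2}S\,d\tilde\mu$, into which I substitute \eqref{eq-dt2-S}.

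The substitution of \eqref{eq-dt2-S} is where the bulk of the work lies: $\tfrac{d^2}{dt^2}S$ contains the ``nonlinear'' terms $2SA_{ij}\nabla_i\nabla_jS+2A^{jk}\nabla_jS\nabla_kS-H|\nabla S|^2+Sg^{kl}\nabla_lS\nabla_kH+S(2A_{ij}\nabla_i\nabla_jS+2S\,\mathrm{tr}(A^3))$ and the ``linear-in-$\mathcal{L}$'' terms $\mathcal{L}\big[\mathcal{L}S+(|A|^2+\tfrac12)S\big]+(|A|^2+\tfrac12)\big[\mathcal{L}S+(|A|^2+\tfrac12)S\big]$. Multiplying by $2S$ and integrating against $d\tilde\mu$, the nonlinear terms assemble directly into the last two integrals of \eqref{eq-energy-der2} (grouping $2SA_{ij}\nabla_i\nabla_jS+S\cdot 2A_{ij}\nabla_i\nabla_jS=4SA_{ij}\nabla_i\nabla_jS$, and keeping $Sg^{kl}\nabla_lS\nabla_kH$, $2S^2\mathrm{tr}(A^3)$ in one bracket and $2A^{jk}\nabla_jS\nabla_kS$, $-H|\nabla S|^2$ in the other). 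For the $\mathcal{L}$-terms, I would use the self-adjointness identity $\int f(\mathcal{L}g)\,d\tilde\mu=\int(\mathcal{L}f)g\,d\tilde\mu$ recorded earlier, to move $\mathcal{L}$ off of $\tfrac{d}{dt}S=\mathcal{L}S+(|A|^2+\tfrac12)S$ and onto $S$, turning $2\int S\,\mathcal{L}[\tfrac{d}{dt}S]\,d\tilde\mu$ into $2\int(\mathcal{L}S)(\tfrac{d}{dt}S)\,d\tilde\mu$, and then recognize that $2\int\big[(\mathcal{L}S)+(|A|^2+\tfrac12)S\big](\tfrac{d}{dt}S)\,d\tilde\mu = 2\int(\tfrac{d}{dt}S)^2\,d\tilde\mu$, which is precisely the $4(\tfrac{d}{dt}S)^2$-type content already booked into the completed square. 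This is the delicate accounting step: one must verify that the $\mathcal{L}$-bracket contribution, after integration by parts, exactly furnishes the square term in \eqref{eq-energy-der2} and leaves no leftover, so that the final answer is as stated.

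\emph{Main obstacle.} The hard part is purely organizational: tracking every factor of $2$ and every sign through the substitution of \eqref{eq-dt2-S}, and in particular confirming that the $\mathcal{L}$-terms in $\tfrac{d^2}{dt^2}S$ — after applying self-adjointness — collapse cleanly into the $\big[-S^3+2(\mathcal{L}S+(|A|^2+\tfrac12)S)\big]^2$ square rather than producing stray $|\nabla(\mathcal{L}S)|^2$ or boundary-type remainders (there are no boundary terms since $M_t$ is closed). There is no conceptual difficulty beyond careful bookkeeping and one application of the self-adjointness of $\mathcal{L}$.
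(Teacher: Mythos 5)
Your proposal is correct and follows essentially the same route as the paper: differentiate \eqref{eq-energy-ev} using \eqref{eq-weight-ev} and \eqref{eq-S-ev}, substitute \eqref{eq-dt2-S} for the second time-derivative of $S$, convert the $\mathcal{L}$-bracket contribution into $\int\big[\mathcal{L}S+(|A|^2+\tfrac12)S\big]^2d\tilde{\mu}$ via self-adjointness, and complete the square, with the leftover $-2\int S^3\big[\mathcal{L}S+(|A|^2+\tfrac12)S\big]d\tilde{\mu}$ and the nonlinear terms appearing exactly as in \eqref{eq-energy-der2}. The factor-of-two accounting you flag as delicate does work out precisely as you suspect ($\int S^6-6\int S^3 w+4\int w^2=\int[-S^3+2w]^2-2\int S^3 w$ with $w=\mathcal{L}S+(|A|^2+\tfrac12)S$), matching the paper's computation.
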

\emph{Proof:} We begin by reexamining the equation \eqref{eq-energy-ev}
\begin{equation*}
    \begin{split}
        \frac{\partial}{\partial t}E(t)=&-\int S^4d\tilde{\mu}+2\int S\Big{[}\mathcal{L}S+(|A|^2+\frac{1}{2})S\Big{]}d\tilde{\mu}
    \end{split}
\end{equation*}
and from this, and \eqref{eq-S-ev}, \eqref{eq-weight-ev}, we get, as a first expanded expression
\begin{equation*}
    \begin{split}
        \Ddot{E}(t)=&\int S^6d\tilde{\mu}-4\int S^3\Big{[}\mathcal{L}S+(|A|^2+\frac{1}{2})S\Big{]}d\tilde{\mu}\\
        &-2\int S^3\Big{[}\mathcal{L}S+(|A|^2+\frac{1}{2})S\Big{]}d\tilde{\mu}\\
        &+2\int \Big{[}\mathcal{L}S+(|A|^2+\frac{1}{2})S\Big{]}^2d\tilde{\mu}\\
        &+2\int S\frac{\partial}{\partial t}\Big{[}\mathcal{L}S+(|A|^2+\frac{1}{2})S\Big{]}d\tilde{\mu}.\\
        \end{split}
\end{equation*}
From here we recall equation \eqref{eq-dt2-S}, which we re-express as 
\begin{equation*}
    \begin{split}
        \frac{d}{dt}\Big{[}\mathcal{L}S+(|A|^2+\frac{1}{2})S\Big{]}=&2SA_{ij}\nabla_i\nabla_jS+2A^{jk}\nabla_jS\nabla_kS-H|\nabla S|^2+Sg^{kl}\nabla_lS\nabla_kH\\
        &+S(2A_{ij}\nabla_i\nabla_jS+2S\text{tr}(A^3))\\
        &+\mathcal{L}\Big{[}\mathcal{L}S+(|A|^2+\frac{1}{2})S\Big{]}+(|A|^2+\frac{1}{2})\Big{[}\mathcal{L}S+(|A|^2+\frac{1}{2})S\Big{]},
    \end{split}
\end{equation*}
and we will use \eqref{eq-dt2-S} to rewrite 
\begin{equation*}
    \begin{split}
        \int S\frac{\partial}{\partial t}\Big{[}\mathcal{L}S+(|A|^2+\frac{1}{2})S\Big{]}d\tilde{\mu}=&\int S\Big{[}\mathcal{L}\Big{[}\mathcal{L}S+(|A|^2+\frac{1}{2})S\Big{]}+(|A|^2+\frac{1}{2})\Big{[}\mathcal{L}S+(|A|^2+\frac{1}{2})S\Big{]}\Big{]}d\tilde{\mu}\\
        &+\int S^2(2A_{ij}\nabla_i\nabla_jS+2S\text{tr}(A^3))d\tilde{\mu}\\
        &+\int S\Big{[}2SA_{ij}\nabla_i\nabla_jS+2A^{jk}\nabla_jS\nabla_kS-H|\nabla S|^2+Sg^{kl}\nabla_lS\nabla_kH\Big{]}d\tilde{\mu}.
    \end{split}
\end{equation*}
The self-adjointness of $\mathcal{L}$ allows us to write
$$\int S\Big{(}\mathcal{L}\Big{[}\mathcal{L}S+(|A|^2+\frac{1}{2})S\Big{]}+(|A|^2+\frac{1}{2})\Big{[}\mathcal{L}S+(|A|^2+\frac{1}{2})S\Big{]}\Big{)}d\tilde{\mu}=\int \Big{[}\mathcal{L}S+(|A|^2+\frac{1}{2})S\Big{]}^2d\tilde{\mu}.$$
Combining our calculations to this point gives
\begin{equation*}
    \begin{split}
        \Ddot{E}(t)=&4\int \Big{[}\mathcal{L}S+(|A|^2+\frac{1}{2})S\Big{]}^2d\tilde{\mu}\\
        &+2\int S\Big{[}4SA_{ij}\nabla_i\nabla_jS+Sg^{kl}\nabla_lS\nabla_kH+2S^2\text{tr}(A^3)\Big{]}d\tilde{\mu}\\
        &+2\int S\Big{[}2A^{jk}\nabla_jS\nabla_kS-H|\nabla S|^2\Big{]}d\tilde{\mu}\\
        &+\int S^6d\tilde{\mu}-6\int S^3\Big{[}\mathcal{L}S+(|A|^2+\frac{1}{2})S\Big{]}d\tilde{\mu}\\
        =&\int S^6d\tilde{\mu}-6\int S^3\Big{[}\mathcal{L}S+(|A|^2+\frac{1}{2})S\Big{]}d\tilde{\mu}+4\int \Big{[}\mathcal{L}S+(|A|^2+\frac{1}{2})S\Big{]}^2d\tilde{\mu}\\
        &+2\int S\Big{[}4SA_{ij}\nabla_i\nabla_jS+Sg^{kl}\nabla_lS\nabla_kH+2S^2\text{tr}(A^3)\Big{]}d\tilde{\mu}\\
        &+2\int S\Big{[}2A^{jk}\nabla_jS\nabla_kS-H|\nabla S|^2\Big{]}d\tilde{\mu}\\
        =&\int \Big{[}-S^3+2\Big{(}\mathcal{L}S+(|A|^2+\frac{1}{2})S\Big{)}\Big{]}^2d\tilde{\mu}\\
        &-2\int S^3\Big{[}\mathcal{L}S+(|A|^2+\frac{1}{2})S\Big{]}d\tilde{\mu}+2\int S\Big{[}4SA_{ij}\nabla_i\nabla_jS+Sg^{kl}\nabla_lS\nabla_kH+2S^2\text{tr}(A^3)\Big{]}d\tilde{\mu}\\
        &+2\int S\Big{[}2A^{jk}\nabla_jS\nabla_kS-H|\nabla S|^2\Big{]}d\tilde{\mu}
    \end{split}
\end{equation*}
which is \eqref{eq-energy-der2}. $\blacksquare$

In the next section we will define the Dirichlet-Einstein quotient and explore its significance for the long term behavior of our energy functional.

\section{Dirichlet-Einstein Quotient}\label{sec-dirichlet-quotient}
In this section we introduce the Dirichlet-Einstein quotient and analyze its evolution in time in an attempt to better understand the long term behavior of \eqref{eq-normal-mcf}.  In \ref{sec-intro} we have motivated the definition of the Dirichelt-Einstein quotient, which we recall is defined as follows: for a family of hypersurfaces moving by the normal rescaled mean curvatire flow \eqref{eq-normal-mcf} or the recaled mean curvature flow \eqref{eq-mcf-rescaled} (which is not a shrinker), we define the \emph{Dirichlet-Einstein quotient} $N(t)$ by the following formula
\begin{equation}\label{eq-dirichlet-quotient}
    N(t)=\frac{\Dot{E}(t)}{E(t)}=\frac{d}{dt}\text{ln}(E(t)).
\end{equation}

%
%
\subsection{Evolution of the Dirichlet-Einstein Quotient}
In this subsection we investigate the time-evolution of the Dirichlet-Einstein quotient. We prove the following lemma.

\begin{lemma}\label{lemma-N-der}
For a family of hypersurfaces $M_t\subset\mathbb{R}^{n+1}$ moving by the normal rescaled mean curvature flow \eqref{eq-normal-mcf}, N(t) which is defined by \eqref{eq-dirichlet-quotient} evolves by
\begin{equation}\label{eq-N-der}
    \begin{split}
        \Dot{N}(t)=&\\
        &\frac{\int S^2d\tilde{\mu}\int \Big{[}-S^3+2\Big{(}\mathcal{L}S+(|A|^2+\frac{1}{2})S\Big{)}\Big{]}^2d\tilde{\mu}-\Big{[}\int S\Big{[}-S^3+2\Big{(}\mathcal{L}S+(|A|^2+\frac{1}{2})S\Big{)}\Big{]}d\tilde{\mu}\Big{]}^2}{E(t)^2}\\ +\frac{1}{E(t)}\Big{[}&-2\int S^3\Big{(}\mathcal{L}S+(|A|^2+\frac{1}{2})S\Big{)}d\tilde{\mu}+2\int S\Big{(}4SA_{ij}\nabla_i\nabla_jS+Sg^{kl}\nabla_lS\nabla_kH+2S^2\text{tr}(A^3)\Big{)}d\tilde{\mu}\\
        &+\int S^2\Big{(}\langle\nabla H,\nabla S\rangle+H\mathcal{L}S\Big{)}d\tilde{\mu}-2\int S^2\Big{(}\nabla_jA^{jk}\nabla_kS+A^{jk}\nabla_j\nabla_kS-2\frac{x_j}{2}A^{jk}\nabla_kS\Big{)}d\tilde{\mu}\Big{]}
    \end{split}
\end{equation}
for all $t$.
\end{lemma}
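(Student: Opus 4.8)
The plan is to differentiate the defining relation $N(t) = \dot E(t)/E(t)$ directly, using the quotient rule:
\begin{equation*}
\dot N(t) = \frac{\ddot E(t) E(t) - \dot E(t)^2}{E(t)^2}.
\end{equation*}
The numerator splits naturally into two pieces, and the strategy is to handle them separately. First I would take the leading ``square-difference'' part of $\ddot E(t)$ — the term $\int[-S^3 + 2(\mathcal L S + (|A|^2+\tfrac12)S)]^2\,d\tilde\mu$ from Lemma~\ref{lemma-energy-der2} — and pair it with $-\dot E(t)^2$. Recalling from Lemma~\ref{lemma-energy-evolution} that $\dot E(t) = \int[-S^3 + 2(\mathcal L S + (|A|^2+\tfrac12)S)]\,S\,d\tilde\mu$ (after collecting $-\int S^4 + 2\int S[\cdots]$ into $\int S[-S^3 + 2(\cdots)]\,d\tilde\mu$), these combine, together with $E(t) = \int S^2\,d\tilde\mu$, into the Cauchy–Schwarz-type expression
\begin{equation*}
\frac{\int S^2 d\tilde\mu \int [\cdots]^2 d\tilde\mu - \big(\int S[\cdots]\,d\tilde\mu\big)^2}{E(t)^2},
\end{equation*}
which is exactly the first fraction in \eqref{eq-N-der} (and is manifestly $\geq 0$, which is the whole point of isolating it).

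The second step is to collect the remaining, lower-order terms of $\ddot E(t)$ — namely $-2\int S^3[\mathcal L S + (|A|^2+\tfrac12)S]\,d\tilde\mu + 2\int S[4SA_{ij}\nabla_i\nabla_j S + Sg^{kl}\nabla_l S\nabla_k H + 2S^2\mathrm{tr}(A^3)]\,d\tilde\mu + 2\int S[2A^{jk}\nabla_j S\nabla_k S - H|\nabla S|^2]\,d\tilde\mu$ — divide through by $E(t)$, and rewrite them in the form appearing in the second bracket of \eqref{eq-N-der}. The first two of these three groups already match, so the real work is to show that
\begin{equation*}
2\int S\big[2A^{jk}\nabla_j S\,\nabla_k S - H|\nabla S|^2\big]\,d\tilde\mu
\end{equation*}
equals
\begin{equation*}
\int S^2\big(\langle\nabla H,\nabla S\rangle + H\mathcal L S\big)\,d\tilde\mu - 2\int S^2\big(\nabla_j A^{jk}\nabla_k S + A^{jk}\nabla_j\nabla_k S - \tfrac{x_j}{2}A^{jk}\nabla_k S\big)\,d\tilde\mu.
\end{equation*}
This is done by integration by parts against the weighted measure $d\tilde\mu$: moving a derivative off $\nabla_k S$ in the term $\int S\,A^{jk}\nabla_j S\,\nabla_k S\,d\tilde\mu$ produces, via $\nabla_k(e^{-|x|^2/4}) = -\tfrac{x_k}{2}e^{-|x|^2/4}$, the terms $\int S^2\nabla_j A^{jk}\nabla_k S$, $\int S^2 A^{jk}\nabla_j\nabla_k S$, and the $\tfrac{x_j}{2}A^{jk}$ correction; similarly $\int S\,H|\nabla S|^2\,d\tilde\mu$ is rewritten by integrating by parts to bring in $H\,\mathcal L S$ and $\langle\nabla H,\nabla S\rangle$ (here one uses the self-adjointness property of $\mathcal L$ recorded in Section~\ref{sec-ev-eq}, or equivalently $\int f\mathcal L g\,d\tilde\mu = -\int\nabla f\nabla g\,d\tilde\mu$, applied with $f = S^2 H$ or its factors). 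One also needs the Codazzi symmetry $\nabla_j A^{jk} = \nabla^k H$ to recognize that the $\nabla_j A^{jk}\nabla_k S$ term is of the claimed shape.

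The main obstacle I expect is bookkeeping in the integration-by-parts step: keeping track of all boundary-free terms on the closed hypersurface $M_t$, correctly producing the weight-derivative contributions proportional to $x$, and matching index placements (raising/lowering with $g$, which itself is $t$-dependent but not differentiated here). There is no conceptual difficulty — $M_t$ is closed so there are no boundary terms, and every identity needed (self-adjointness of $\mathcal L$ with respect to $d\tilde\mu$, Codazzi, the evolution equations \eqref{eq-S-ev}, \eqref{eq-weight-ev}, and the second derivative \eqref{eq-dt2-S}) is already available — but the algebra is where an error would most likely creep in, so I would carry it out carefully term by term, grouping by the number of derivatives on $S$ to cross-check.
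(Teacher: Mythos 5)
Your proposal matches the paper's proof essentially step for step: quotient rule for $\dot N$, substitution of Lemmas \ref{lemma-energy-evolution} and \ref{lemma-energy-der2}, pairing the square term with $-\dot E^2$ to form the Cauchy--Schwarz expression, and integrating by parts the term $2\int S[2A^{jk}\nabla_jS\nabla_kS - H|\nabla S|^2]\,d\tilde\mu$ against the weight $e^{-|x|^2/4}$ to produce the second bracket. (The Codazzi identity you mention is not actually needed, since $\nabla_jA^{jk}\nabla_kS$ appears verbatim in the final formula; this is a harmless remark, not a gap.)
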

\emph{Proof:} We begin by the obvious calculation
$$\Dot{N}=\frac{\Ddot{E}E-\Dot{E}^2}{E^2}$$
which can be computed directly from the definition of $N(t)$. 

This calculation allows us to appeal to equations \eqref{eq-energy-ev} and \eqref{eq-energy-der2} to calculate that
\begin{equation*}
    \begin{split}
        \Dot{N}(t)=\frac{1}{E^2(t)}&\Big{[}\int S^2 d\tilde{\mu}\int \Big{[}-S^3+2\Big{(}\mathcal{L}S+(|A|^2+\frac{1}{2})S\Big{)}\Big{]}^2d\tilde{\mu}\\
        &-2\int S^2 d\tilde{\mu}\int S^3\Big{[}\mathcal{L}S+(|A|^2+\frac{1}{2})S\Big{]}d\tilde{\mu}\\
        &+2\int S^2 d\tilde{\mu}\int S\Big{[}4SA_{ij}\nabla_i\nabla_jS+Sg^{kl}\nabla_lS\nabla_kH+2S^2\text{tr}(A^3)\Big{]}d\tilde{\mu}\\
        &+2\int S^2 d\tilde{\mu}\int S\Big{[}2A^{jk}\nabla_jS\nabla_kS-H|\nabla S|^2\Big{]}d\tilde{\mu}\\
        &-\Big{(}\int S^4d\tilde{\mu}+2\int S\Big{[}\mathcal{L}S+(|A|^2+\frac{1}{2})S\Big{]}d\tilde{\mu}\Big{)}^2\Big{]}.
    \end{split}
\end{equation*}
Now we shall investigate some of these terms. Consider the quantity
$$\int S\Big{[}2A^{jk}\nabla_jS\nabla_kS-H|\nabla S|^2\Big{]}d\tilde{\mu}.$$
We may integrate this by parts, term by term as follows. We have
\begin{equation*}
    \begin{split}
        2\int A^{jk}S\nabla_jS\nabla_kSd\tilde{\mu}=&\int A^{jk}\nabla_jS^2\nabla_kSd\tilde{\mu}\\
        =&\int \nabla_jS^2A^{jk}\nabla_kSe^{-\frac{
        |x|^2}{4}}d\mu\\
        =&-\int S^2\Big{[}\nabla_jA^{jk}\nabla_kSe^{-\frac{
        |x|^2}{4}}+A^{jk}\nabla_j\nabla_kSe^{-\frac{
        |x|^2}{4}}+A^{jk}\nabla_kS(-\frac{x_j}{2})e^{-\frac{ |x|^2}{4}}\Big{]}d\mu\\
        =&-\int S^2\Big{[}\nabla_jA^{jk}\nabla_kS+A^{jk}\nabla_j\nabla_kS-\frac{x_j}{2}A^{jk}\nabla_kS\Big{]}d\tilde{\mu}.
    \end{split}
\end{equation*}
Next, we write
\begin{equation*}
    \begin{split}
        -\int HS|\nabla S|^2d\tilde{\mu}=&-\frac{1}{2}\int\nabla_jS^2\big{(}H\nabla_jSe^{-\frac{ |x|^2}{4}}\big{)}d\mu\\
        =&\frac{1}{2}\int S^2\Big{[}\langle\nabla H,\nabla S\rangle e^{-\frac{ |x|^2}{4}}+H\Delta Se^{-\frac{ |x|^2}{4}}-H\frac{x_j}{2}\nabla_jSe^{-\frac{ |x|^2}{4}}\Big{]}d\mu\\
        =&\frac{1}{2}\int S^2\Big{[}\langle\nabla H,\nabla S\rangle+H\mathcal{L}S\Big{]}d\tilde{\mu}.
    \end{split}
\end{equation*}
Combining these calculations gives
\begin{equation*}
    \begin{split}
        \Dot{N}(t)=\frac{1}{E^2(t)}&\Big{[}\int S^2 d\tilde{\mu}\int \Big{[}-S^3+2\Big{(}\mathcal{L}S+(|A|^2+\frac{1}{2})S\Big{)}\Big{]}^2d\tilde{\mu}\\
        &-2\int S^2 d\tilde{\mu}\int S^3\Big{[}\mathcal{L}S+(|A|^2+\frac{1}{2})S\Big{]}d\tilde{\mu}\\
        &+2\int S^2 d\tilde{\mu}\int S\Big{[}4SA_{ij}\nabla_i\nabla_jS+Sg^{kl}\nabla_lS\nabla_kH+2S^2\text{tr}(A^3)\Big{]}d\tilde{\mu}\\
        &-2\int S^2 d\tilde{\mu}\int S^2\Big{[}\nabla_jA^{jk}\nabla_kS+A^{jk}\nabla_j\nabla_kS-\frac{x_j}{2}A^{jk}\nabla_kS\Big{]}d\tilde{\mu}\\
        &+\int S^2 d\tilde{\mu}\int S^2\Big{[}\langle\nabla H,\nabla S\rangle+H\mathcal{L}S\Big{]}d\tilde{\mu}\\
        &-\Big{(}\int S^4d\tilde{\mu}+2\int S\Big{[}\mathcal{L}S+(|A|^2+\frac{1}{2})S\Big{]}d\tilde{\mu}\Big{)}^2\Big{]}
    \end{split}
\end{equation*}
which is \eqref{eq-N-der}. $\blacksquare$
\\
\newline
With the evolution of the Dirichlet-Einstein quotient computed, we can now begin to work on bounding it in accordance with our plan laid out in the introduction (section \ref{sec-intro}).

\subsection{Order of Terms in $\Dot{N}$}
In this subsection we prove a bound on $\Dot{N}$ that will be central to our proof. We will bound $\Dot{N}$ in terms of the various covariant derivatives $\nabla^lS$ of $S$. As we approach a shrinker, since $S\equiv 0$ on a shrinker, we obviously have that $\nabla^lS\to 0$ on $M_t$. The hope is that later on we can prove some bound of the form
\begin{equation*}
    \int_{T_0}^\infty||\nabla^{(l)}S(p,t)||_{\infty, M_t}\leq C_l(\Sigma,\int_{M_0} e^{-\frac{|x|^2}{4}}d\mu-\int_\Sigma e^{-\frac{|x|^2}{4}}d\mu ).
\end{equation*}

Such bounds would enable us to control
$$N(t)=N(T_0)+\int_{T_0}^t\Dot{N}(s)ds$$
as $t\to\infty$.

From our work above the following corollary is immediate.
\begin{corollary}
For a family of hypersurfaces $M_t\subset\mathbb{R}^{n+1}$ moving by the normal rescaled mean curvature flow \eqref{eq-normal-mcf}, if $N(t)$ is the Dirichlet-Einstein quotient, we have that
\begin{equation}\label{eq-N-der-bound}
        \Dot{N}(t)\geq-C(||S||_\infty+||\nabla S||_\infty+||\nabla^2S||_\infty)
\end{equation}
where $C=C(S, |A|,|\nabla A|, H, \nabla H, x)$.
\end{corollary}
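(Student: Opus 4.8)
The plan is to read the lower bound off directly from the formula \eqref{eq-N-der} for $\dot N(t)$ established in Lemma~\ref{lemma-N-der}: one term there is manifestly nonnegative and may be discarded, and each remaining term is coarse enough to be controlled by sup norms of $S$ and its first two covariant derivatives. First I would set $\Phi := -S^3 + 2\big(\mathcal{L}S + (|A|^2+\tfrac12)S\big)$, so that by \eqref{eq-energy-ev} one has $\dot E(t) = \int_{M_t} S\,\Phi\,d\tilde{\mu}$ and by \eqref{eq-energy-der2} the leading term of $\ddot E(t)$ is $\int_{M_t}\Phi^2\,d\tilde{\mu}$. The first fraction in \eqref{eq-N-der} is then exactly
\[
\frac{\int S^2\,d\tilde{\mu}\;\int \Phi^2\,d\tilde{\mu}-\big(\int S\,\Phi\,d\tilde{\mu}\big)^2}{E(t)^2},
\]
which is the defect in the Cauchy--Schwarz inequality in $L^2(M_t,d\tilde{\mu})$ applied to the pair $(S,\Phi)$, hence is $\ge 0$. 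Since only a lower bound on $\dot N(t)$ is wanted, I would drop this term and keep only the group of terms carrying the prefactor $1/E(t)$.

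The structural observation that makes the rest work is that every surviving summand is an integral of $S^2$ (or of $S^3$ or $S^4$) against a product of two kinds of factors: geometric coefficients assembled from $|A|^2$, $\text{tr}(A^3)$, $H$, $\nabla H$, $\nabla A$, and the position vector $x$; and a covariant derivative of $S$ of order at most two, where any $\mathcal{L}S = \Delta S - \tfrac12\langle x,\nabla S\rangle$ contributes at most $\|\nabla^2 S\|_\infty$ plus a ($\|x\|_\infty$-dependent) multiple of $\|\nabla S\|_\infty$. For each such term I would pull out of the integral the sup norm of the highest-order factor $\nabla^{(l)}S$, $l\in\{0,1,2\}$, appearing beyond the distinguished $S^2$, together with the sup norms of the geometric coefficients and of any residual power of $S$ (e.g.\ $S^4 = S^2\cdot S^2 \le \|S\|_\infty^2\,S^2$ and $S^3\,\mathcal{L}S \le \|S\|_\infty\,\|\mathcal{L}S\|_\infty\,S^2$); what remains inside the integral is exactly $\int_{M_t}S^2\,d\tilde{\mu} = E(t)$, which cancels the $1/E(t)$. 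Each term is then bounded below by $-C_j\,\|\nabla^{(l_j)}S\|_\infty$ with $C_j$ depending only on sup norms over $M_t$ of $S$, $|A|$, $|\nabla A|$, $H$, $\nabla H$, and $x$; summing the finitely many contributions and setting $C := \sum_j C_j$ yields \eqref{eq-N-der-bound}.

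The main obstacle --- the step I would recheck most carefully --- is precisely this last piece of bookkeeping. One must verify that, after the integrations by parts already performed in the proof of Lemma~\ref{lemma-N-der}, no remaining term is merely linear or bilinear in $S$ and its derivatives; a term lacking an honest factor of $S^2$ (or a higher power of $S$) could not be absorbed by $E(t)^{-1}\int S^2\,d\tilde{\mu}$ and would destroy the estimate. Inspecting \eqref{eq-N-der} summand by summand shows that each one in the bracket does carry $S^2$, $S^3$, or $S^4$, which is the reason the integrations by parts in Lemma~\ref{lemma-N-der} were organized as they were. Finiteness of the geometric sup norms is automatic on each closed hypersurface $M_t$, and since \eqref{eq-N-der-bound} is asserted pointwise in $t$ nothing further is needed.
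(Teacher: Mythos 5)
Your proposal is correct and follows essentially the same route as the paper: the first fraction in \eqref{eq-N-der} is recognized as the Cauchy--Schwarz defect for the pair $\big(S,\,-S^3+2(\mathcal{L}S+(|A|^2+\tfrac12)S)\big)$ in $L^2(d\tilde{\mu})$ and discarded, and each remaining term is bounded below by extracting sup norms of the geometric coefficients and of $\nabla^{(l)}S$ for $l\le 2$, leaving a factor $\int S^2\,d\tilde{\mu}=E(t)$ that cancels the prefactor $1/E(t)$. The bookkeeping check you flag (that every bracketed summand carries at least an $S^2$) is exactly the point the paper relies on, so nothing further is needed.
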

\emph{Proof:} Consider term by term on the right hand side of  equation \eqref{eq-N-der}.

We have
$$-\frac{1}{E(t)}\int S^3\Big{(}\mathcal{L}S+(|A|^2+\frac{1}{2})S\Big{)}d\tilde{\mu}\geq-\Big{(}\|S\mathcal{L}S\|_\infty+\||A|^2+\frac{1}{2}\|_\infty\|S^2\|_\infty\Big{)}$$
so this term is suitably bounded by the uniform boundedness of $|A|^2$ and $|\nabla^lS|$ along the flow.

Next we have that
\begin{equation*}\begin{split}\frac{1}{E(t)}\int S&\Big{(}4SA_{ij}\nabla_i\nabla_jS+Sg^{kl}\nabla_lS\nabla_kH+2S^2\text{tr}(A^3)\Big{)}d\tilde{\mu}\geq\\
&-\Big{(}4(\|A\|_{\infty} \|\nabla^2S\|_\infty+\|\nabla H\|_{\infty} \|\nabla S\|_\infty+2\|A^3\|_{\infty} \|S^2\|_\infty\Big{)}\end{split}\end{equation*}
which again shows this term gives us an estimate that is claimes in \eqref{eq-N-der-bound}.
We also  have that
$$\frac{1}{E(t)}\int S^2\Big{(}\langle\nabla H,\nabla S\rangle+H\mathcal{L}S\Big{)}d\tilde{\mu}\geq-\Big{(}\|\nabla H\|_\infty\|\nabla S\|_\infty+\|H\|_\infty (\|\frac{x}{2}\|_\infty\|\nabla S\|_\infty+\|\nabla^2S\|_\infty)\Big{)}$$
which is suitably bounded as $H$ is another geometric quantity over which we have uniform control as $t\to\infty$, as well as the fact that $x$ stays bounded along the flow. Furthermore,
\begin{equation*}\begin{split}-\frac{1}{E(t)}\int &S^2\Big{(}\nabla_jA^{jk}\nabla_kS+A^{jk}\nabla_j\nabla_kS-2\frac{x_j}{2}A^{jk}\nabla_kS\Big{)}d\tilde{\mu}\geq\\
&-\Big{(}\|\nabla A\|_{\infty} \|\nabla S\|_{\infty}+\|A\|_{\infty} \|\nabla^2S\|_\infty+\|x\|_{\infty}\| A\|_{\infty} \|\nabla S\|_\infty\Big{)}\end{split}\end{equation*}

which again is suitably bounded for all the reasons we have discussed prior.

Finally, we use the Cauchy-Schwartz inequality with $S$ and $-S^3+2\Big{(}\mathcal{L}S+(|A|^2+\frac{1}{2})S\Big{)}$ to show that
$$\int S^2d\tilde{\mu}\int \Big{[}-S^3+2\Big{(}\mathcal{L}S+(|A|^2+\frac{1}{2})S\Big{)}\Big{]}^2d\tilde{\mu}-\Big{[}\int S\Big{[}-S^3+2\Big{(}\mathcal{L}S+(|A|^2+\frac{1}{2})S\Big{)}\Big{]}d\tilde{\mu}\Big{]}^2\geq 0.$$

Combining the results from above yields
$$\Dot{N}(t)\geq-C(S, |A|, |\nabla A|, H, \nabla H, x)(||S||_\infty+||\nabla S||_\infty+||\nabla^2S||_\infty)$$
which proves \eqref{eq-N-der-bound}.
 $\blacksquare$

%

\section{Controlling $\nabla^lS$}
In order to use \eqref{eq-N-der-bound} we need to get a better control of $S$ and its covariant derivatives. We take care of that in the following Proposition.
\begin{prop}
Let $\Sigma$ be a compact shrinker. Then if $M_t$ is some family of hypersurface moving by the normal rescaled mean curvature flow \eqref{eq-normal-mcf} satisfying
$$\Omega(M_t)\geq\Omega(\Sigma)$$
for all $t\geq 0$, then we may find $\delta$ and $T_0$ such that if 
\begin{equation}
\label{eq-distance}
dist(M_0, \Sigma) < \delta,
\end{equation}
then we may write $M_t$ as a normal graph of $v(p,t)$ over $\Sigma$ for $t\geq T_0$ and such that for every $l\geq 0$,
\begin{equation}\label{eq-est-S}
    \int_{T_0}^\infty||\nabla^{(l)}S(p,t)||_{\infty, M_t}\, dt\leq C_l(\Sigma,\int_{M_0} e^{-\frac{|x|^2}{4}}d\mu-\int_\Sigma e^{-\frac{|x|^2}{4}}d\mu ).
\end{equation}
\end{prop}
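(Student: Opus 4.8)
The plan is to run the Lojasiewicz--Simon scheme underlying \cite{Sc}, in three stages: (i) obtain graphicality of $M_t$ over $\Sigma$ together with uniform $C^\infty$ bounds on its geometry from Schultze's theorem; (ii) combine the weighted Lojasiewicz inequality of \cite{Sc} with Huisken's monotonicity formula to control $\int_{T_0}^{\infty}E(t)^{1/2}\,dt$ by a constant depending only on $\Sigma$ and $\Omega(M_0)-\Omega(\Sigma)$; and (iii) upgrade this to the stated $L^{\infty}$-in-space, $L^{1}$-in-time bound on every $\nabla^{(l)}S$ by interior parabolic estimates for the equation \eqref{eq-S-ev}.

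For stage (i), pick $\delta$ small enough that Schultze's theorem \cite{Sc} applies to a flow with $\dist(M_0,\Sigma)<\delta$: there is then $T_0$ so that for $t\ge T_0$ the surface $M_t$ is a normal graph over $\Sigma$ of a small-$C^{2,\alpha}$ function $v(\cdot,t)$ lying in the neighborhood where the Lojasiewicz inequality of \cite{Sc} holds; parabolic smoothing for the rescaled flow then bounds $v(\cdot,t)$ in $C^k(\Sigma)$ for every $k$, uniformly in $t\ge T_0$ --- enlarging $T_0$ if necessary so that these bounds hold on $[T_0-1,\infty)$ --- whence $M_t$ stays in a fixed compact region of $\R^{n+1}$ and $|A|,|\nabla A|,\dots$, $H,\nabla H,\dots$, $|x|$, $S,\nabla S,\dots$ are all bounded by constants depending only on $\Sigma$, with $e^{-|x|^2/4}$ bounded above and below there. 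For stage (ii), put $f(t):=\Omega(M_t)-\Omega(\Sigma)\ge 0$; by Huisken's formula $f'(t)=-E(t)\le 0$, and if $f(t_1)=0$ for some $t_1\ge T_0$ then $f\equiv 0$ and $S\equiv 0$ on $[t_1,\infty)$, where \eqref{eq-est-S} is trivial, so assume $f>0$ on $[T_0,\infty)$. The Lojasiewicz--Simon inequality of \cite{Sc} provides $C_\Sigma>0$ and $\theta\in(0,\tfrac12]$ with $f(t)^{1-\theta}\le C_\Sigma\bigl(\int_{M_t}S^{2}\,d\tilde{\mu}\bigr)^{1/2}=C_\Sigma E(t)^{1/2}$, so $-\tfrac{d}{dt}f(t)^{\theta}=\theta f(t)^{\theta-1}E(t)\ge\theta C_\Sigma^{-1}E(t)^{1/2}$, and integration yields
\[
\int_{T_0}^{\infty}E(t)^{1/2}\,dt\le\frac{C_\Sigma}{\theta}f(T_0)^{\theta}\le\frac{C_\Sigma}{\theta}\bigl(\Omega(M_0)-\Omega(\Sigma)\bigr)^{\theta},
\]
using monotonicity of $\Omega$; likewise $\int_{T_0-1}^{\infty}E(t)\,dt=\Omega(M_{T_0-1})-\lim_{t\to\infty}\Omega(M_t)\le\Omega(M_0)-\Omega(\Sigma)$.

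For stage (iii), first note that Lemma \ref{lemma-energy-evolution} together with $\int S\,\mathcal{L}S\,d\tilde{\mu}=-\int|\nabla S|^{2}\,d\tilde{\mu}\le 0$ gives $\dot E(t)\le 2\,\||A|^{2}+\tfrac12\|_{\infty}\,E(t)=:KE(t)$, hence $E(s)\le e^{2K}E(t-1)$ for $s\in[t-1,t+1]$. Next, pulling \eqref{eq-S-ev} back to the fixed closed manifold $\Sigma$ via the graph map turns it into a uniformly parabolic linear equation for $S$ whose coefficients are bounded in $C^{\infty}$ uniformly on $[T_0-1,\infty)$ by stage (i); the standard interior $L^{2}\!\to\!L^{\infty}$ parabolic estimate, followed by Schauder estimates for the derivatives, then yields, for every $l$ and every $t\ge T_0$,
\[
\|\nabla^{(l)}S(\cdot,t)\|_{\infty,M_t}^{2}\le C_l(\Sigma)\int_{t-1}^{t+1}\!\int_{M_s}S^{2}\,d\mu\,ds\le C_l(\Sigma)\int_{t-1}^{t+1}E(s)\,ds,
\]
the two-sided weight bound having been absorbed. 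For $t\in[T_0,T_0+1]$ the right side is $\le C_l(\Sigma)\int_{T_0-1}^{\infty}E\le C_l(\Sigma)\bigl(\Omega(M_0)-\Omega(\Sigma)\bigr)$, so this interval contributes at most $C_l(\Sigma)^{1/2}\bigl(\Omega(M_0)-\Omega(\Sigma)\bigr)^{1/2}$ to $\int_{T_0}^{\infty}\|\nabla^{(l)}S\|_{\infty}\,dt$; for $t\ge T_0+1$ the right side is $\le 2e^{2K}C_l(\Sigma)E(t-1)$, so $\int_{T_0+1}^{\infty}\|\nabla^{(l)}S(\cdot,t)\|_{\infty}\,dt\le\bigl(2e^{2K}C_l(\Sigma)\bigr)^{1/2}\int_{T_0}^{\infty}E(t)^{1/2}\,dt$, which is finite by stage (ii). Adding the two pieces gives \eqref{eq-est-S} with $C_l=C_l\bigl(\Sigma,\Omega(M_0)-\Omega(\Sigma)\bigr)$.

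The ODE manipulation in stage (ii) and the cited inputs are routine; the real obstacle is stage (iii) --- extracting \emph{uniform}, integrable-in-time, $L^{\infty}$ bounds on \emph{every} derivative $\nabla^{(l)}S$ out of only $L^{2}$-in-space, integrable-in-time decay of $S$. This rests on interior parabolic regularity for \eqref{eq-S-ev} with uniformly controlled coefficients (hence on the uniform $C^{\infty}$ bounds of stage (i), themselves a consequence of parabolic smoothing for the flow and of Schultze's closeness estimate) and on the Gronwall bound $\dot E\le KE$, which is what lets one replace $\sup_{[t-1,t+1]}E$ by $E(t-1)$. One must also check that the Lojasiewicz inequality of \cite{Sc} is available in the weighted $L^{2}$ norm $E(t)^{1/2}$, and that every constant produced above depends only on $\Sigma$ and $\Omega(M_0)-\Omega(\Sigma)$.
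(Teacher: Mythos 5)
Your proposal is correct in outline, and stages (i)--(ii) coincide with the paper's argument (graphicality and uniform $C^\infty$ bounds from Schultze's theorem, then the Lojasiewicz--Simon ODE for $Q(t)=\Omega(M_t)-\Omega(\Sigma)$ driven by Huisken's monotonicity formula, with the trivial branch when $Q$ vanishes). Where you genuinely diverge is the key upgrade from $L^2$-in-space control of $S$ to integrable-in-time $L^\infty$ control of every $\nabla^{(l)}S$. The paper runs the Lojasiewicz ODE with an exponent $\beta\in(2-\tfrac{1}{\theta},1)$ strictly less than $1$, obtaining $\int\|S\|_{L^2}^{\beta}\,dt\leq C\,Q(T_0)^{\gamma}$, and then converts $\|S\|_{L^2}^{\beta}$ into $\|S\|_{C^l}$ \emph{pointwise in time} via Gagliardo--Nirenberg interpolation ($\|\nabla^qS\|\leq C\|\nabla^NS\|^{q/N}\|S\|^{1-q/N}$ with $N$ large, using the uniform bounds on $\|\nabla^NS\|$ to absorb the high-derivative factor) followed by Sobolev embedding; the slack $\beta<1$ is exactly what pays for the power lost in interpolation, and the paper splits into two cases according to whether $\beta<1-\beta$ (a step one of the authors flags in the source as not yet satisfactory). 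You instead keep $\beta=1$, bound $\int E^{1/2}\,dt$ directly, and recover sup-norm bounds on derivatives from interior parabolic smoothing for $\partial_tS=\mathcal{L}S+(|A|^2+\tfrac12)S$ over unit time intervals, using the Gronwall inequality $\dot E\leq KE$ (which the paper also proves, but only uses later in the proof of Theorem \ref{thm-main}) to collapse $\int_{t-1}^{t+1}E$ to $E(t-1)$. Your route trades the interpolation machinery and the $\beta$ bookkeeping for parabolic Schauder/De Giorgi--Nash estimates with uniformly controlled time-dependent coefficients; it sidesteps the problematic case analysis entirely, at the cost of a time-localization that is absent from the paper and of having to justify the pullback of \eqref{eq-S-ev} to a fixed uniformly parabolic equation on $\Sigma$ (which your stage (i) bounds do supply). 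Both arguments rest on the same two external inputs, Schultze's graphicality-plus-Lojasiewicz theorem and Huisken's monotonicity, and both deliver a constant depending only on $\Sigma$ and $\Omega(M_0)-\Omega(\Sigma)$, so your proof is an acceptable, arguably cleaner, alternative.
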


\emph{Proof:} First, suppose that $\varphi:M\times[0,\infty)\to\mathbb{R}^{n+1}$ is a normal rescaled mean curvature flow moving by \eqref{eq-normal-mcf}. Then we may find a family of diffeomorphisms $\psi(t):M\to M$ such that we have for every $t$ that
\begin{equation*}
    \frac{d}{dt}\varphi(\psi(t)(p),t)=H(\psi(t)(p),t)\nu(\psi(t)(p),t)+\frac{\varphi(\psi(t)(p),t)}{2}
\end{equation*}
in other words, that $\tilde{\varphi}(p,t)=\varphi(\psi(t)(p),t)$ moves by the (original) rescaled mean curvature flow \eqref{eq-mcf-rescaled}. Let us write $\tilde{M}_t$ for this family of hypersurfaces moving by \eqref{eq-mcf-rescaled}, keeping in mind that for every $t$, we have $\tilde{M}_t=\psi(t)(M_t)$,
that is, our underlying geometric object is the same.
Since we are moving by \eqref{eq-mcf-rescaled}, if we define the new variable $\tau=-e^{-t}$, 
and $\Bar{\varphi}:M\times[-1,0)\to\mathbb{R}^{n+1}$ by the equation
$$\Bar{\varphi}(\cdot,\tau)=e^{-\frac{t}{2}}\tilde{\varphi}(\cdot,t),$$
then $\Bar{\varphi}(M_\tau)$ moves by the mean curvature flow \eqref{eq-mcf}. We write $\Bar{M}_\tau$ for this family of hypersurfaces, and have that $\Bar{M}_{-1}=\tilde{M}_0=M_0$, as hypersurfaces. Note also that  $\Sigma$  is a stationary solution to \eqref{eq-normal-mcf}, and that the mean curvature flow $\Sigma_\tau  =\sqrt{-\tau}\Sigma$, up to tangential diffeomorphisms satisfies \eqref{eq-mcf}. 
%
%
By our assumption we have that
$$\Omega(M_0)\geq\Omega(\Sigma),$$
and hence by the continuity of the dependence of the mean curvature flow on its initial conditions, we may find $\tau_2<0$, and some $\delta_0$ such that if $dist(M_0,\Sigma)<\delta_0$,
then the flow $(\Bar{M}_\tau)_{\tau\in(-1,\tau_2)}$ is sufficiently close in measure to the flow $\Sigma_\tau$. 
Thus, by Theorem 1.1 in  \cite{Sc} we have  the solutions $\tilde{M_t}$ of the rescaled flow \eqref{eq-mcf-rescaled}, may be written as normal graphs of a function $v$ over $\Sigma$ for $\frac{-1+\tau_2}{2}<\tau<0$. Hence we may write $\tilde{M}_t$ as a normal graph of a function $v(p,t)$ over $\Sigma$ for all $t>-\text{log}(-\frac{-1+\tau_2}{2})=:T_0$.

Again, by the continuous dependence, we may also choose $\delta_0$ small enough that 
$$\|v(T_0)\|_{C^{2,\alpha}}<\sigma_0$$
where $\sigma_0$ and  $\theta\in(\frac{1}{2},1)$ are as in \cite{Sc}, such that if $v$ is a $C^{2,\alpha}$ section of $T^\perp\Sigma$ with $\|v\|_{C^{2,\alpha}}<\sigma_0$ then

\begin{equation}\label{eq-Loj}||\nabla \Omega(v)||_{L^2(\Sigma)}\geq|\Omega(v)-\Omega(0)|^\theta.\end{equation}

Now, for $t>T_0$, we consider the quantity $Q(t)$ defined by
$$\tilde{Q}(t)=\Omega(\tilde{M}_t)-\Omega(\Sigma).$$
Obviously, by Huisken's monotonicity formula, $Q(t)$ is monotone decreasing, and $Q(t)\geq 0$. This together with monotonicity of $Q(t)$ implies that if $Q(t_0)=0$, we must have $Q(t)\equiv 0$ for all $t\geq t_0$, meaning, of course, that
$$\frac{d}{dt}\Omega(\tilde{M}_t)\equiv 0$$
for $t\geq t_0$, which would mean that $\tilde{M}_t$ is a shrinker for $t\geq t_0$ by Huisken's monotonicity formula. In fact we may apply the backwards uniqueness results in \cite{HH} to show that $\tilde{M}_t$ is a shrinker for all times. Obviously, in this case, we have \eqref{eq-est-S} for free since $\nabla^lS\equiv 0$
on any shrinker. 

As such, we will now consider the case when $Q(t)>0$ for all $t>T_0$. We proceed similarly to \cite{Ko}. Fixing $\beta\in(2-\frac{1}{\theta}, 1)$, gives that $0<(2-\beta)\theta<1$, so we calculate directly that
$$-\frac{d}{dt}\tilde Q^{1-(2-\beta)\theta}=(1-(2-\beta)\theta)\tilde Q^{-(2-\beta)\theta}\frac{d\tilde Q}{dt}$$
and Huisken's monotonicity formula gives
$$-\frac{d}{dt}\tilde Q^{1-(2-\beta)\theta}=(1-(2-\beta)\theta)||S||^2_{L^2(e^{-\frac{|x|^2}{4}}d\mu_{M_t})}\tilde Q^{-(2-\beta)\theta}.$$
The Lojasiewicz inequality \eqref{eq-Loj} allows us to write that
$$\tilde Q^{-(2-\beta)\theta}||S||^{2-\beta}_{L^2(e^{-\frac{|x|^2}{4}}d\mu_{M_t})}=\tilde Q^{-(2-\beta)\theta}||\nabla\Omega(M_t)||^{2-\beta}_{L^2(d\mu_{M_t})}\geq 1,$$
which implies that
$$-\frac{d}{dt}\tilde Q^{1-(2-\beta)\theta}\geq C_0^{-1}||S||^\beta_{L^2(e^{-\frac{|x|^2}{4}}d\mu_{M_t})}$$
for some $C_0=C_0(\beta,\theta)$.

At this point we should note that every calculation we have performed for $\tilde Q(t)$, assuming that our family of hypersurfaces is moving by the rescaled mean curvature flow (2) also holds true for $Q(t)$ if we define $Q(t)$ as 
$$Q(t)=\Omega(M_t)-\Omega(\Sigma)$$
where $M_t$ is the corresponding family of hypersurfaces moving via the normal rescaled mean curvature flow \eqref{eq-normal-mcf}, since
$$\Omega(M_t)=\Omega(\psi(t)(M_t))$$
for all $t$ for any family $\psi$ of tangential diffeomorphisms. As such, from here on we simply use $Q(t)$ and $\tilde{Q}(t)$ interchangeably, writing $Q(t)$ for either.

Because of this, we can write that for $0<a,s<\infty$,
\begin{equation}\label{eq-S-beta}
    \int_a^s||\partial_t \varphi (p,t)||^\beta_{L^2(e^{-\frac{|x|^2}{4}}d\mu_{M_t})}dt\leq C(C_0,\beta,\theta,C_L)\Big{[}Q(a)^\gamma-Q(s)^\gamma\Big{]}
\end{equation}
for $\gamma=1-(2-\beta)\theta$.

Derivative estimates for the rescaled mean curvature flow (2), are obviously invariant under tangential diffeomorphism and allow us to write
$$\|\nabla^lS\|_{\infty,M_t}\leq K_l$$
on $M_t$ uniformly. We will now explore the implications of inequality \eqref{eq-S-beta}.  \\

%

Returning now to the matter at hand, we shall break the proof into two cases depending on the relationship between $\beta$ and $1-\beta$.\\
\emph{Case 1:} $\beta<1-\beta$

In this case, given any $p$, we may choose $N=N(p)$ such that 
$$\frac{p}{N}\leq\beta$$
and therefore, of course, $1-\frac{p}{N}\geq 1-\beta$. We may use interpolation inequalities to write that for $0\leq q\leq p$ we have
\begin{equation}
\label{eq-interpolation}
\int |\nabla^q(S)|^2d\mu\leq C\Big{[}\int|\nabla^NS|^2d\mu\Big{]}^{\frac{q}{N}}\Big{[}\int|S|^2d\mu\Big{]}^{1-\frac{q}{N}}
\end{equation}
or
$$\|\nabla^qS\|\leq C\|\nabla^NS\|^{\frac{q}{N}}\|S\|^{1-\frac{q}{N}}.$$

In what follows we consider the quantity 
$$s:=1-\frac{q}{N}-(1-\beta)=\beta-\frac{q}{N},$$
which by our choice of $N$ guarantees that $s\geq 0$.

We obviously have, by the definition of $s$,
$$\|S\|^{1-\frac{q}{N}}=\|S\|^{1-\beta}\|S\|^{s}.$$
Since $\|S\|\leq\|S\|_{L^{2,N}}$, $\|\nabla^NS\|\leq\|S\|_{L^{2,N}}$, and $s+\frac{q}{N}=\beta$ we can write 
$$\|\nabla^qS\|\leq C\|S\|^{1-\beta}\|S\|^{\beta}_{L^{2,N}}$$
for every $q\leq p$. This allows us the find $C'(p,m)$ such that we may write
$$\|S\|_{L^{2,p}}\leq C'\|S\|^{1-\beta}\|S\|^\beta_{L^{2,N}}.$$
Since $\beta<1-\beta$ and $\|S\|\leq\|S\|_{L^{2,N}}$, we can write that
$$\|S\|_{L^{2,p}}\leq C'\|S\|^{\beta}\|S\|^{1-\beta}_{L^{2,N}}.$$\\
{\color{red} I still have a problem with above, how do you use $\beta < 1-\beta$? This is not true the way it is written. I do not know how you intend to use it later. }
\emph{Case 2:} $1-\beta\leq\beta$

In this case, given any $p$, we may choose $N=N(p)$ such that 
$$\frac{p}{N}\leq 1-\beta$$
and therefore, of course, $1-\frac{p}{N}\geq \beta$. 


In what follows we consider the quantity 
$$s:=1-\frac{q}{N}-\beta=(1-\beta)-\frac{q}{N},$$
which by our choice of $N$ guarantees that $s\geq 0$.

We obviously have, by the definition of $s$,
$$\|S\|^{1-\frac{q}{N}}=\|S\|^{\beta}\|S\|^{s}.$$
Since $\|S\|\leq\|S\|_{L^{2,N}}$, $\|\nabla^NS\|\leq\|S\|_{L^{2,N}}$, and $s+\frac{q}{N}=1-\beta$, by the interpolation inequality \eqref{eq-interpolation}, similarly as in the previous case we have 
$$\|\nabla^qS\| \leq C\|S\|^{\beta}\|S\|^{1-\beta}_{L^{2,N}}$$
for every $q\leq p$. This allows us to find $C'(p,m)$ such that we may write
$$\|S\|_{L^{2,p}}\leq C'\|S\|^{\beta}\|S\|^{1-\beta}_{L^{2,N}}.$$

Thus either way, since $|\nabla^lS|$ is uniformly bounded for every $l$, and since $M_0$ is compact, we may write that 
$$\|S\|_{L^{2,p}}\leq C''(\beta,p,\Sigma, M_0)\|S\|^{\beta}_{L^2(e^{-\frac{|x|^2}{4}})}.$$\\
The Sobolev embedding theorem says that if
%
%
$n$ is odd, 
we have a natural embedding $W^{k,2}\subset C^{k-\frac{n+1}{2},\frac{1}{2}}$, and if $n$ is even then
we have natural embeddings $W^{k,2}\subset C^{k-1-\frac{n}{2},\gamma}$,
for every $\gamma\in(0,1)$.
Given that we have
\begin{equation*}
    \|S\|_{L^{2,k}}\leq C''(\beta,k,\Sigma, M_0)\|S\|^{\beta}_{L^2(e^{-\frac{|x|^2}{4}})}
\end{equation*}
for any $k$, we may simply enlarge $k$ as much as possible to claim that due to the Sobolev embedding theorems
we may find a $C''$ such that
\begin{equation}
    \|S\|_{C^l}\leq C''(\beta,l, n,\Sigma, M_0)\|S\|^{\beta}_{L^2(e^{-\frac{|x|^2}{4}})}
\end{equation}
for any $l$.

Recall that for $t>T_0$ we may express $M_t$ as the normal graph of a function $v(t)$ over $\Sigma$. 
We have, for $T_0 \le a \le s < \infty$,
$$\|v(s)-v(a)\|_{C^l_\Sigma}\leq C(l)\int_a^s\|\partial_t\varphi(p,t)\|_{C^l_\Sigma}dt$$
and
$$\int_a^s\|\partial_t\varphi(p,t)\|_{C^l_\Sigma}dt\leq C(l,\Sigma,M_0,C'')\int_a^s\|\partial_t\varphi (p,t)\|_{C^l_{M_t}}dt.$$

Recall that $\partial_t\varphi=S\nu$ so 
$$\|S\nu\|_{C^l}\leq\sum_{k=0}^l\Big{(}\sum_{p+q=k}\|\nabla^pS\|_\infty\|\nabla^q\nu\|_\infty\Big{)}\leq C\|S\|_{C^l}$$
where $C$ is finite as it is dependant on the $\nu$ and its covariant derivatives which are dependant on the covariant derivatives of the second fundamental form, which are all bounded along the flow. And since
$$\|S\|_{C^l}=\|S\|_\infty+\|\nabla S\|_\infty+...+\|\nabla^lS\|_\infty,$$
we have that inequality (32) combined with inequality (34) give
\begin{equation}
    \int_a^s\Big{[}\|S(t)\|_\infty+\|\nabla S(t)\|_\infty+...+\|\nabla^lS(t)\|_\infty\Big{]}dt\leq C(C_0,\beta,\theta,C_L, l, \Sigma, M_0)\Big{[}Q(a)^\gamma-Q(s)^\gamma\Big{]}
\end{equation}
and the monotonicity of $Q(t)$ allows us to simply write
\begin{equation}
    \int_a^s\Big{[}\|S(t)\|_\infty+\|\nabla S(t)\|_\infty+...+\|\nabla^lS(t)\|_\infty\Big{]}dt\leq C(C_0,\beta,\theta,C_L, l, \Sigma, M_0)\Big{[}Q(T_0)^\gamma\Big{]}
\end{equation}
for all $a,s\in[T_0,\infty)$ and this, pushing $s\to\infty$ and $a\to T_0$, implies (30), which completes the proof of the proposition. $\blacksquare$

\section{Rate of convergence}
In this section we will prove Theorem \ref{thm-main}.  Our approach to the proof will be through contradiction. We shall first prove the following  lemma, which we refer to as \emph{the rate lemma}, and which concerns the $C^{2,\alpha}$ norm of the graph-function which represents $M_t$ as a graph of $v(t)$ over $\Sigma$.
\begin{lemma}
 If  for every $m\geq 0$, we have $C_m'$ such that for $t_i>T_0$
$$||v(\cdot, t_i)||_{C^{2,\alpha}}< C'_me^{-mt_i},$$
then we may also find constants $C_m''$ such that for $t>T'$ (where $T'$ is the first $t_i>T_0$)

\begin{equation*}\int_{M_{t_i}}e^{-\frac{|x|^2}{4}}d\mu-\int_\Sigma e^{-\frac{|x|^2}{4}}d\mu\leq C^{''}_me^{-mt_i}.\end{equation*}
\end{lemma}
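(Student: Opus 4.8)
\emph{Proof proposal.} The plan is to exploit that $\Sigma$ is a critical point of the (rescaled) Huisken functional $\Omega$ among normal graphs over $\Sigma$, so that $\Omega(M_t)-\Omega(\Sigma)$ is quadratically small in the size of the graph function $v(\cdot,t)$; the hypothesis then upgrades a rate $e^{-mt_i}$ for $\|v\|_{C^{2,\alpha}}$ to a rate $e^{-2mt_i}\leq e^{-mt_i}$ for $\Omega(M_{t_i})-\Omega(\Sigma)$.

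\emph{Step 1 (the functional on graphs is smooth near $0$).} By the preceding Proposition we may, after possibly shrinking $\delta_0$ and enlarging $T_0$, write $M_t$ for $t\geq T_0$ as a normal graph over $\Sigma$ of $v(\cdot,t)$ with $\|v(\cdot,t)\|_{C^{2,\alpha}(\Sigma)}<\sigma_0$, where $\sigma_0$ is the radius from \cite{Sc} appearing in \eqref{eq-Loj}. Define $\mathcal{F}(w):=\Omega(\mathrm{graph}_\Sigma w)$. Writing the position and the induced volume element of $\mathrm{graph}_\Sigma w$ in terms of the fixed geometry of $\Sigma$, one sees that $\mathcal{F}(w)=\int_\Sigma G\big(p,w(p),\nabla w(p)\big)\,d\mathcal{H}^n$ for a function $G$ smooth in $(w,\nabla w)$; hence $\mathcal F$ is smooth on the ball $\{\|w\|_{C^{2,\alpha}}<\sigma_0\}$, and there is $\Lambda=\Lambda(\Sigma,\sigma_0)<\infty$ with $|D^2\mathcal F(w)[v,v]|\leq \Lambda\,\|v\|_{C^{2,\alpha}}^2$ whenever $\|w\|_{C^{2,\alpha}}<\sigma_0$ (the second $w$-derivatives of the integrand are bounded once $\|w\|_{C^1}$ is).

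\emph{Step 2 (first variation vanishes at $\Sigma$).} The first variation of $\Omega$ in a normal direction $\phi\nu$ at any hypersurface is $\int \phi\,\big(H+\tfrac{x\cdot\nu}{2}\big)e^{-\frac{|x|^2}{4}}d\mu=\int\phi\,S\,d\tilde\mu$; this is exactly what underlies the monotonicity formula $\tfrac{d}{dt}\Omega(M_t)=-\int S^2 d\tilde\mu$ recorded in Section \ref{sec-energy}. Since $S\equiv 0$ on the shrinker $\Sigma$, we get $D\mathcal F(0)=0$.

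\emph{Step 3 (Taylor expansion and conclusion).} For $\|v\|_{C^{2,\alpha}}<\sigma_0$, Steps 1--2 give
$$\mathcal F(v)-\mathcal F(0)=\int_0^1(1-s)\,D^2\mathcal F(sv)[v,v]\,ds,\qquad |\mathcal F(v)-\mathcal F(0)|\leq \tfrac{\Lambda}{2}\,\|v\|_{C^{2,\alpha}}^2.$$
Applying this with $v=v(\cdot,t_i)$ for $t_i>T'$ and using the hypothesis $\|v(\cdot,t_i)\|_{C^{2,\alpha}}<C'_m e^{-mt_i}$,
$$\int_{M_{t_i}}e^{-\frac{|x|^2}{4}}d\mu-\int_\Sigma e^{-\frac{|x|^2}{4}}d\mu=\mathcal F(v(\cdot,t_i))-\mathcal F(0)\leq \tfrac{\Lambda}{2}(C'_m)^2 e^{-2mt_i}\leq \tfrac{\Lambda}{2}(C'_m)^2 e^{-mt_i},$$
where the last inequality uses $t_i>T'\geq 0$ and $m\geq 0$. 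Taking $C''_m:=\tfrac{\Lambda}{2}(C'_m)^2$ proves the lemma. (Equivalently, one may apply the hypothesis with $2m$ in place of $m$ to absorb the squaring directly.)

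\emph{Main obstacle.} The only non-formal point is the quadratic estimate $|\mathcal F(v)-\mathcal F(0)|\leq C\|v\|_{C^{2,\alpha}}^2$, which relies on (a) identifying the shrinker equation $S\equiv 0$ as the Euler--Lagrange equation of $\Omega$, so that $D\mathcal F(0)=0$, and (b) a uniform bound for $D^2\mathcal F$ on a fixed $C^{2,\alpha}$-neighborhood of $\Sigma$; both are standard but should be stated. An alternative to Step 3 is to combine $\|\nabla\Omega(v)\|_{L^2(\Sigma)}\lesssim \|v\|_{C^{2,\alpha}}$ (again from $\nabla\Omega(0)=0$ and smoothness, since $\nabla\Omega$ corresponds to $S\nu$) with the Lojasiewicz inequality \eqref{eq-Loj} to get $|\Omega(v)-\Omega(0)|\leq \|\nabla\Omega(v)\|_{L^2}^{1/\theta}\lesssim \|v\|_{C^{2,\alpha}}^{1/\theta}$, and then use $1/\theta>1$; this also only invokes ingredients already available in the excerpt.
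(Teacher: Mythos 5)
Your proof is correct, but it takes a genuinely different route from the paper. The paper's proof is a direct, first-order computation: it rewrites $\int_{M_{t_i}}e^{-|x|^2/4}d\mu$ as an integral over $\Sigma$ via the graph map $y\mapsto y+v(y,t_i)\nu$, factors the Gaussian weight as $e^{-|y|^2/4}\cdot e^{-|v\nu|^2/4}\cdot e^{-\langle y,v\nu\rangle/2}$, and splits the difference with $\Omega(\Sigma)$ into three explicit pieces (a cross-term piece, a $|v|^2$ piece, and a Jacobian piece), each estimated by $C\|v\|_{C^1}$ using Taylor expansion of the exponential and boundedness of $|J|$. This yields only the \emph{linear} bound $\Omega(M_{t_i})-\Omega(\Sigma)\leq\tilde C\|v\|_{C^{2,\alpha}}$, which suffices because the hypothesis holds for every $m$. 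You instead exploit that $\Sigma$ is a critical point of $\Omega$ among normal graphs ($D\mathcal F(0)[\phi]=-\int_\Sigma\phi\,S\,d\tilde\mu=0$ since $S\equiv0$ on $\Sigma$, consistent with the variation formula \eqref{eq-weight-ev}) and a uniform second-derivative bound on a fixed $C^{2,\alpha}$-ball, obtaining the \emph{quadratic} bound $|\mathcal F(v)-\mathcal F(0)|\leq\tfrac{\Lambda}{2}\|v\|_{C^{2,\alpha}}^2$ via Taylor's theorem with integral remainder. Your approach is more conceptual and gives a strictly stronger estimate (and your fallback via the Lojasiewicz inequality \eqref{eq-Loj} is also valid); the paper's is more elementary and self-contained, avoiding any appeal to criticality or to smoothness of $\mathcal F$ as a map on a Banach space of sections. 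Both close the lemma, since the extra factor $e^{-mt_i}\leq1$ (or replacing $m$ by $2m$) absorbs the squaring. The one point you should make explicit is that the convexity of the ball $\{\|w\|_{C^{2,\alpha}}<\sigma_0\}$ guarantees $\|sv\|_{C^{2,\alpha}}<\sigma_0$ for all $s\in[0,1]$, so the uniform bound on $D^2\mathcal F$ applies along the whole segment; otherwise the argument is complete.
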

\emph{Proof:} Suppose, in accordance with our hypotheses, that for every $m>0$, we could find $C'_m$ such that for $t>T_0$ we have
$$\|{v}\|_{C^{2,\alpha}}<C'_me^{-mt_i}$$
for all $t_i$.

We now take the time to rewrite $\int_{M_{t_i}}e^{-\frac{|x|^2}{4}}d\mu$, as Schulze does in \cite{Sc}, as an integral over $\Sigma$ as follows
$$\int_{M_{t_i}}e^{-\frac{|x|^2}{4}}d\mu=\int_\Sigma e^{-\frac{|y+v(y,t_i)\,\nu|^2}{4}}|J|(y,v,\nabla^\Sigma v)d\mu(y)$$
where $|J|(y,v,\nabla^\Sigma v)$ is the Jacobian of the transformation $y\to y+v(y,t_i)\nu$. We shall now rewrite $\int_{M_{t_i}}e^{-\frac{|x|^2}{4}}d\mu-\int_\Sigma e^{-\frac{|x|^2}{4}}d\mu$ as 
$$\int_\Sigma \Big{(}e^{-\frac{|y+v(y,t_i)\nu|^2}{4}}|J|(y,v,\nabla^\Sigma v)-e^{-\frac{|y|^2}{4}}\Big{)}d\mu(y).$$
We further simplify this expression by writing
$$e^{-\frac{|y+v(y,t_i)\nu|^2}{4}}=e^{-\frac{|y|^2}{4}}\cdot e^{-\frac{|v(y,t_i)
\nu|^2}{4}}\cdot e^{-\frac{\langle y, v(y,t_i)\nu\rangle}{2}}$$
and then we rewrite our key quantity $\int_\Sigma \Big{(}e^{-\frac{|y+v(y,t_i)\nu|^2}{4}}|J|(y,v,\nabla^\Sigma v)-e^{-\frac{|y|^2}{4}}\Big{)}d\mu(y)$, breaking it up as follows, writing our Jacobian as simply $|J|$
\begin{equation*}
    \begin{split}
        \int_\Sigma \Big{(}e^{-\frac{|y+v(y,t_i)\nu|^2}{4}}|J|(y,v,\nabla^\Sigma v)-e^{-\frac{|y|^2}{4}}\Big{)}d\mu(y)=&\int_\Sigma \Big{(}e^{-\frac{|y+v(y,t_i)\nu|^2}{4}}|J|-e^{-\frac{|y|^2}{4}}\cdot e^{-\frac{|v(y,t_i)\nu|^2}{4}}|J|\Big{)}d\mu(y)\\
        &+\int_\Sigma \Big{(}e^{-\frac{|y|^2}{4}}\cdot e^{-\frac{|v(y,t_i)\nu|^2}{4}}|J|-e^{-\frac{|y|^2}{4}}|J|\Big{)}d\mu(y)\\
        &+\int_\Sigma \Big{(}e^{-\frac{|y|^2}{4}}|J|-e^{-\frac{|y|^2}{4}}\Big{)}d\mu(y)\\
        =&\int_\Sigma \Big{(}e^{-\frac{\langle y, v(y,t_i)\nu\rangle}{2}}-1\Big{)}e^{-\frac{|y|^2}{4}}\cdot e^{-\frac{|v(y,t_i)\nu|^2}{4}}|J|d\mu(y)\\
        &+\int_\Sigma \Big{(} e^{-\frac{|v(y,t_i)\nu|^2}{4}}-1\Big{)}e^{-\frac{|y|^2}{4}}|J|d\mu(y)\\
        &+\int_\Sigma \Big{(}|J|-1\Big{)}e^{-\frac{|y|^2}{4}}d\mu(y)\\
        =&:A+B+D.
    \end{split}
\end{equation*}
Since we assumed our flow is compact, we know that $y$ is bounded, and using that $v(y,t_i)$ is small, there exists a uniform constant $C$ that may change from line to line in a uniform way, so that
\begin{equation*}
    \begin{split}
        A=&\int_\Sigma \Big{(}e^{-\frac{\langle y, v(y,t_i)\nu\rangle}{2}}-1\Big{)}e^{-\frac{|y|^2}{4}}\cdot e^{-\frac{|v(y,t_i)\nu|^2}{4}}|J|d\mu(y)\\
        \leq&\int_\Sigma \Big{|}\langle y, v(y,t_i)\nu\rangle\Big{|}e^{-\frac{|y|^2}{4}}\cdot e^{-\frac{|v(y,t_i)\nu|^2}{4}}|J|d\mu(y)\\
        \leq&C\|v\|_{C_0}\int_\Sigma e^{-\frac{|y|^2}{4}}d\mu(y)\\
    \end{split}
\end{equation*}
since $|J|$ is bounded. We also have a uniform constant $C$ which may change uniformly from line to line such that
\begin{equation*}
    \begin{split}
        B=&\int_\Sigma \Big{(} e^{-\frac{|v(y,t_i)|^2}{4}}-1\Big{)}e^{-\frac{|y|^2}{4}}|J|d\mu(y)\\
        \leq&\int_\Sigma \frac{|v(y,t_i)|^2}{2}e^{-\frac{|y|^2}{4}}|J|d\mu(y)\\
        \leq& C\|v\|_{C_0}^2\int_\Sigma e^{-\frac{|y|^2}{4}}d\mu(y)\\
        \leq& C\|v\|_{C_0}\int_\Sigma e^{-\frac{|y|^2}{4}}d\mu(y)
    \end{split}
\end{equation*}
where we used the Talor expansion of the exponential function and the smallness of $v$. Finally, we have $D$. Our Jacobian $|J|$ is the Jacobian of the transformation
$y\to y+v(y,t_i)\nu(y)$, with the entries of its corresponding matrix $J$ given by
$$J_{jk}=\partial_j(y+v(y,t_i)\nu(y))_k=\delta_{jk}+\partial_jv(y,t_i)\nu_k-v(y,t_i)h_{jl}\big{(}\frac{\partial\varphi}{\partial x_l}\big{)}_k.$$
One can see that every term of $|J|-1$ contains $v$ or its first derivatives (or the entries of the second fundamental form which are of course bounded),and so it is also only dependant on $\|{v}_{C^1}$, and we can see, by writing out the entries of its corresponding matrix, that $J-1$ depends on $v$ and its first derivatives in a sub-linear way, as such, we can write. 
$$\int_{M_{t_i}}e^{-\frac{|x|^2}{4}}d\mu-\int_\Sigma e^{-\frac{|x|^2}{4}}d\mu\leq \tilde{C}\|v\|_{C^{2,\alpha}}$$
where $\tilde{C}$ is a uniform constant dependant only on quantities bounded along the flow. $\blacksquare$\\

We are now ready to continue to the proof of Theorem \ref{thm-main}.
\begin{proof}[Proof of Theorem \ref{thm-main}]
We begin by stating Theorem 1.1 of \cite{Sc}, which says

Let $\mathcal{M}=(\mu_t)_{t\in(t_1,0)}$ with $t<0$ be an integral $n$-Brakke flow such that 
\begin{enumerate}[i)]
    \item $(\mu_t)_{t\in(t_1,t_2)}$ is sufficiently close in measure to $\mathcal{M}_\Sigma$ for some $t_1<t_2<0$.
    \item $\int_{M_t}\frac{1}{(-4\pi t)^\frac{n}{2}}e^{-\frac{|x|^2}{4}}d\mathcal{H}^n\geq\int_\Sigma\frac{1}{(-4\pi t)^\frac{n}{2}}e^{-\frac{|x|^2}{4}}d\mathcal{H}^n$.
\end{enumerate}
Then $\mathcal{M}$ is a smooth flow for $t\in[(t_1+t_2)/2,0)$, and the rescaled surfaces $\tilde{M}_t$ can be rewritten as normal graphs over $\Sigma$, given by smooth sections $v(t)$ of the normal bundle with $\|v(t)\|_{C^m(T^\perp\Sigma)}$ uniformly bounded for all $t\in[(t_1+t_2)/2,0)$ and all $m\in\mathbb{N}$. Furthermore, there exists a self-similarly shrinking surface $\Sigma'$ with
$$\Sigma'=\text{graph}_\Sigma(v')$$
and
$$\|v(t)-v'\|_{C^m}\leq c_m(\text{log}(-1/t))^{-\alpha_m}$$
for some constants $c_m>0$ and exponents $\alpha_m>0$ for all $m\in\mathbb{N}$.

If $\tilde{\varphi}:M\times[0,\infty)\to\mathbb{R}^{n+1}$ is a rescaled mean curvature flow moving by \eqref{eq-mcf-rescaled}, then as before we define
$\tau=-e^{-t}$, and $\varphi:M\times[-1,0)\to\mathbb{R}^{n+1}$ is moving by \eqref{eq-mcf} via rescaling $\varphi(\cdot,\tau)=e^{-\frac{t}{2}}\widetilde{\varphi}(\cdot, t).$
In the non-rescaled flow, per our hypothesis, we can find some $\tau_1,\tau_2\in[-1,0)$ ($\tau_1<\tau_2$) such that our restriction of the mean curvature flow $\varphi:M\times[\tau_1,\tau_2)\to\mathbb{R}^{n+k}$ is sufficiently close in measure to the homothetically shrinking flow produced by the shrinker $\Sigma$. Thus we can apply Theorem 1.1 from \cite{Sc}, to get that our solutions moving by \eqref{eq-mcf-rescaled} can be expressed as normal graphs of a function $v(\cdot,\tau)$ for $\frac{\tau_1+\tau_2}{2}<\tau<0$.

As such, we may define $v(\cdot,t)$ (we use the same $v$, just change its time variable) for all $t>-\text{log}(-\frac{\tau_1+\tau_2}{2})=:T_0$ such that our original flow $\tilde{\varphi}$ is expressible as a normal graph of $v(\cdot,t)$ over $\Sigma$ for all $t>T_0$. This establishes the needed graphicality.

We will now approach the proof by contradiction. Suppose that the theorem were false, that is, suppose that $M_t$ is a rescaled mean curvature flow such that for every $m>0$, we have $C'_m$ such that for $t_i>T_0$, we have
$$\|{v(\cdot,t_i)}_{C^{2,\alpha}}<C'_me^{-mt_i}$$
for every $i$, but $M_t$ is never a shrinker. 

By Lemma 3.1 equation (23), we have that 
\begin{equation*}
    \begin{split}
        \Dot{E}(t)=&-\int S^4d\tilde{\mu}+2\int S\Big{[}\mathcal{L}S+(|A|^2+\frac{1}{2})S\Big{]}d\tilde{\mu}\\
        =&-\int S^4d\tilde{\mu}-2\int|\nabla S|^2d\tilde{\mu}+2\int(|A|^2+\frac{1}{2})S^2d\tilde{\mu}\\
        \leq&2\int(|A|^2+\frac{1}{2})S^2d\tilde{\mu}\\
        \leq & K(|A|^2)E(t)
    \end{split}
\end{equation*}
so we can write for $t>T_0$ that
$$E(t)\leq E(T_0)e^{K(t-T_0)}.$$

Now we recall the quantity
$$N(t)=\frac{\Dot{E}}{E}=\frac{d}{dt}\text{ln}(E).$$
We claim that we are able to produce some $K'>0$, such that we have that $E(t)\geq E(T_0)e^{-K'(t-T_0)}$. To show this, we will prove that $\text{ln}(E(t))\geq-K'(t-T_0)+\text{ln}(E(T_0))$ by proving that for all $t$, we have
$$N(t)\geq-K'$$
i.e. that $N$ is bounded from below for all time. Corollary 8.1 gives us that 
$$\Dot{N}(t)\geq-C(||S||_\infty+||\nabla S||_\infty+||\nabla^2S||_\infty).$$
We then apply Proposition 5.1, inequality (33) for the cases $l=0,1,2$
to get that for for all $t$,
\begin{equation}
    \begin{split}
        N(t)=&\int_0^t\Dot{N}(\xi)d\xi\\
        \geq&-C\int_0^\infty(\|{S(\xi)}_\infty+\|{\nabla S(\xi)}_\infty+\|{\nabla^2S(\xi)}_\infty)d\xi\\
        =:&K'>-\infty,
    \end{split}
\end{equation}
which gives us the appropriate $K'$, and allows us to write that
\begin{equation}E(T_0)e^{-K'(t-T_0)}\leq E(t)\leq E(T_0)e^{K(t-T_0)}\end{equation}
for all $t>T_0$. Huisken's monotonicity formula gives
$$\frac{d}{dt}\Omega(M_t)=-E(t)$$
so we can use inequality (38) to write that for $s\geq t$
\begin{equation*}
\begin{split}
\Omega(M_t)-\Omega(M_s)=&\int_t^s E(\zeta)d\zeta\\
\geq& \int_t^sE(T_0)e^{-K'(\zeta-T_0)}d\zeta\\
=& \frac{E(T_0)}{K'}\Big{[}e^{-K'(t-T_0)}-e^{-K'(s-T_0)}\Big{]}.
\end{split}
\end{equation*}
Pushing $s\to\infty$ gives
\begin{equation}
\Omega(M_t)-\Omega(\Sigma)\geq \frac{E(T_0)}{K'}e^{-K'(t-T_0)}.
\end{equation}

We apply the rate lemma, Lemma 6.1, to conclude that for every $m>0$, we must also have $C''_m$ such that for $t_i>T_0$, we have
\begin{equation}\int_{M_{t_i}}e^{-\frac{|x|^2}{4}}d\mu-\int_\Sigma e^{-\frac{|x|^2}{4}}d\mu\leq C''_me^{-mt_i}\end{equation}
for every $i$.

Combining inequalities (39) and (40) yields
\begin{equation*}
    E(T_0)\leq Ce^{(K'-m)t_i}
\end{equation*}
for every $m$ and $i$. Since we may increase $m$ and $i$ arbitrarily, we conclude that we must have 
\begin{equation}
    E(T_0)=0,
\end{equation}
so $M_{T_0}$ must be a shrinker. By the backwards uniqueness results for mean curvature flow proven in \cite{HH}, we have that our flow is a shrinker for all time, which is of course a contradiction. We are left to conclude that either there exist $C,m>0$ such that
$
\|{v(\cdot,t_i)}_{C^{2,\alpha}(\Sigma)}\geq Ce^{-mt_i}
$
or $M_t$ is a shrinker moving by diffeomorphisms. $\blacksquare$

\end{proof}

\end{document}